\DeclareFontFamily{U}{mathx}{\hyphenchar\font45}
\DeclareFontShape{U}{mathx}{m}{n}{
      <5> <6> <7> <8> <9> <10>
      <10.95> <12> <14.4> <17.28> <20.74> <24.88>
      mathx10
      }{}
\DeclareSymbolFont{mathx}{U}{mathx}{m}{n}
\DeclareMathAccent{\widecheck}{0}{mathx}{"71}
\DeclareMathAccent{\wideparen}{0}{mathx}{"75}
\newtheorem{theorem}{Theorem}[section]
\newtheorem{lemma}[theorem]{Lemma}
\newtheorem{definition}[theorem]{Definition}
\newtheorem{corollary}[theorem]{Corollary}
\newtheorem{assumption}[theorem]{Assumption}
\newtheorem{remark}[theorem]{Remark}
\Crefname{hypothesis}{Hypothesis}{Hypotheses}
\Crefname{ALC@unique}{Line}{Lines}
\pgfplotsset{compat=1.13} 
\tikzset{external/system call = {%
    pdflatex \tikzexternalcheckshellescape
    -halt-on-error
    -interaction=batchmode
    -jobname "\image" "\texsource"}}
\newcommand{%
  \tikzsetnextfilename{}%
  \filemodCmp{figures/.tikz}{figures/.pdf}%
  {\tikzset{external/remake next}}{}%
  \input{figures/.tikz}%
}[1]{%
  \tikzsetnextfilename{#1}%
  \filemodCmp{figures/#1.tikz}{figures/#1.pdf}%
  {\tikzset{external/remake next}}{}%
  \input{figures/#1.tikz}%
}
\newcommand{\errparam}[2]{%
  \tikzsetnextfilename{err_param_#1}
  \begin{tikzpicture}
    \pgfplotstableread{code/results/figures/#1/#2/%
      piecewise_H2_#1.dat}%
    {\errparamtable}
    \begin{semilogyaxis}[%
        width=0.666\textwidth,
        height=0.1\textheight,
        log basis y=10,
        scale only axis,
        xlabel={Parameter $\p$},
        ylabel={$\varepsilon_{\p}$},
        legend style={
          at={(1.02, 1)},
          anchor=north west,
          font=\footnotesize,
        },
        legend cell align={left},
        legend entries={pIRKA ROM, Opt. ROM, IRKA ROMs},
        cycle list name=hund_three,
        grid=both,
        errs_#1,
      ]

      \foreach \i in {1, 2, 3}
        \addplot table[x index=0, y index=\i] {\errparamtable};
    \end{semilogyaxis}
  \end{tikzpicture}
}
\newcommand{\surfpolefomplots}[1]{%
  \pgfplotstableread{code/results/figures/#1/axes_range_fom.dat}%
  {\axesfomtable}
  \pgfplotstablegetelem{0}{[index]0}\of{\axesfomtable}
  \pgfmathsetmacro{\xmin}{\pgfplotsretval}
  \pgfplotstablegetelem{0}{[index]1}\of{\axesfomtable}
  \pgfmathsetmacro{\xmax}{\pgfplotsretval}
  \pgfplotstablegetelem{1}{[index]0}\of{\axesfomtable}
  \pgfmathsetmacro{\ymin}{\pgfplotsretval}
  \pgfplotstablegetelem{1}{[index]1}\of{\axesfomtable}
  \pgfmathsetmacro{\ymax}{\pgfplotsretval}
  \pgfplotstablegetelem{2}{[index]0}\of{\axesfomtable}
  \pgfmathsetmacro{\zmin}{\pgfplotsretval}
  \pgfplotstablegetelem{2}{[index]1}\of{\axesfomtable}
  \pgfmathsetmacro{\zmax}{\pgfplotsretval}

  \tikzsetnextfilename{surf_pole_#1}
  \begin{tikzpicture}
    \begin{semilogxaxis}[%
        name=surf fom,
        view={0}{90},
        width=\surfsize, 
        height=\surfsize,
        scale only axis,
        enlargelimits=false,
        xlabel={Frequency $\omega$ [rad/s]},
        ylabel style={align=center},
        ylabel={\textbf{FOM}\\Parameter $\p$},
        point meta min=\zmin,
        point meta max=\zmax,
        surf fom colorbar,
        axis on top,
      ]
      \addplot graphics [
        xmin=\xmin, xmax=\xmax,
        ymin=\ymin, ymax=\ymax,
      ] {code/results/figures/#1/surf_H2L2};
    \end{semilogxaxis}
    \begin{axis}[
        poles fom pos,
        poles #1,
        point meta min=\ymin,
        point meta max=\ymax,
        poles colorbar,
        axis on top,
      ]
      \addplot graphics [
        poles #1 limits,
      ] {code/results/figures/#1/poles.pdf};
    \end{axis}
  \end{tikzpicture}
}
\newcommand{\surfpoleromplots}[2]{%
  \pgfplotstableread{code/results/figures/#1/#2/axes_range.dat}%
  {\axesromtable}
  \pgfplotstablegetelem{0}{[index]0}\of{\axesromtable}
  \pgfmathsetmacro{\xmin}{\pgfplotsretval}
  \pgfplotstablegetelem{0}{[index]1}\of{\axesromtable}
  \pgfmathsetmacro{\xmax}{\pgfplotsretval}
  \pgfplotstablegetelem{1}{[index]0}\of{\axesromtable}
  \pgfmathsetmacro{\ymin}{\pgfplotsretval}
  \pgfplotstablegetelem{1}{[index]1}\of{\axesromtable}
  \pgfmathsetmacro{\ymax}{\pgfplotsretval}
  \pgfplotstablegetelem{2}{[index]0}\of{\axesromtable}
  \pgfmathsetmacro{\zmin}{\pgfplotsretval}
  \pgfplotstablegetelem{2}{[index]1}\of{\axesromtable}
  \pgfmathsetmacro{\zmax}{\pgfplotsretval}

  \tikzsetnextfilename{surf_pole_rom_#1}
  \begin{tikzpicture}
    \begin{semilogxaxis}[%
        name=surf pirka,
        view={0}{90},
        width=\surfsize, 
        height=\surfsize,
        scale only axis,
        enlargelimits=false,
        xticklabels={,,},
        ylabel style={align=center},
        ylabel={\textbf{pIRKA ROM}\\Parameter $\p$},
        point meta min=\zmin,
        point meta max=\zmax,
        surf err colorbar,
        axis on top,
      ]
      \addplot graphics [
        xmin=\xmin, xmax=\xmax,
        ymin=\ymin, ymax=\ymax,
      ] {code/results/figures/#1/#2/surf_H2L2_pIRKA};
    \end{semilogxaxis}
    \begin{semilogxaxis}[%
        name=surf granso,
        at={(surf pirka.south west)},
        yshift=-1em,
        anchor=north west,
        view={0}{90},
        width=\surfsize, 
        height=\surfsize,
        scale only axis,
        enlargelimits=false,
        xlabel={Frequency $\omega$ [rad/s]},
        ylabel style={align=center},
        ylabel={\textbf{Opt. ROM}\\Parameter $\p$},
        axis on top,
      ]
      \addplot graphics [
        xmin=\xmin, xmax=\xmax,
        ymin=\ymin, ymax=\ymax,
      ] {code/results/figures/#1/#2/surf_H2L2_GRANSO};
    \end{semilogxaxis}
    \begin{axis}[%
        poles pirka pos,
        poles #1,
        xlabel={},
        xticklabels={,,},
        point meta min=\ymin,
        point meta max=\ymax,
        poles colorbar,
        axis on top,
      ]
      \addplot graphics [
        poles #1 limits,
      ] {code/results/figures/#1/#2/poles_pIRKA.pdf};
    \end{axis}
    \begin{axis}[%
        poles granso pos,
        poles #1,
        axis on top,
      ]
      \addplot graphics [
        poles #1 limits,
      ] {code/results/figures/#1/#2/poles_GRANSO.pdf};
    \end{axis}
  \end{tikzpicture}
}
\newcommand{\surfpoleromplotsextended}[4]{%
  \pgfplotstableread{preliminary_results/figures/#1/#4/axes_range.dat}%
  {\axesromtable}
  \pgfplotstablegetelem{0}{[index]0}\of{\axesromtable}
  \pgfmathsetmacro{\xmin}{\pgfplotsretval}
  \pgfplotstablegetelem{0}{[index]1}\of{\axesromtable}
  \pgfmathsetmacro{\xmax}{\pgfplotsretval}
  \pgfplotstablegetelem{1}{[index]0}\of{\axesromtable}
  \pgfmathsetmacro{\ymin}{\pgfplotsretval}
  \pgfplotstablegetelem{1}{[index]1}\of{\axesromtable}
  \pgfmathsetmacro{\ymax}{\pgfplotsretval}
  \pgfplotstablegetelem{2}{[index]0}\of{\axesromtable}
  \pgfmathsetmacro{\zmin}{\pgfplotsretval}
  \pgfplotstablegetelem{2}{[index]1}\of{\axesromtable}
  \pgfmathsetmacro{\zmax}{\pgfplotsretval}

   \tikzsetnextfilename{surf_pole_rom_#1}
  \begin{tikzpicture}
    \begin{semilogxaxis}[%
        name=surf pirka,
        view={0}{90},
        width=\surfsize, 
        height=\surfsize,
        scale only axis,
        enlargelimits=false,
        xticklabels={,,},
        ylabel style={align=center},
        ylabel={\textbf{pIRKA ROM}\\Parameter $\p$},
        point meta min=\zmin,
        point meta max=\zmax,
        surf err colorbar,
        axis on top,
      ]
      \addplot graphics [
        xmin=\xmin, xmax=\xmax,
        ymin=\ymin, ymax=\ymax,
      ] {code/results/figures/#1/#4/surf_H2L2_pIRKA};
    \end{semilogxaxis}
    \begin{semilogxaxis}[%
        name=surf granso,
        at={(surf pirka.south west)},
        yshift=-1em,
        anchor=north west,
        xticklabels={,,},
        view={0}{90},
        width=\surfsize, 
        height=\surfsize,
        scale only axis,
        enlargelimits=false,
        ylabel style={align=center},
        ylabel={\textbf{Opt. ROM$_{\text{SP}}$}\\Parameter $\p$},
        axis on top,
      ]
      \addplot graphics [
        xmin=\xmin, xmax=\xmax,
        ymin=\ymin, ymax=\ymax,
      ] {code/results/figures/#1/#4/surf_H2L2_GRANSO};
    \end{semilogxaxis}
    \begin{semilogxaxis}[%
        name=surf BC,
        at={(surf granso.south west)},
        yshift=-1em,
        anchor=north west,
        xticklabels={,,},
        view={0}{90},
        width=\surfsize, 
        height=\surfsize,
        scale only axis,
        enlargelimits=false,
        ylabel style={align=center},
        ylabel={\textbf{Opt. ROM$_{\text{IO}}$}\\Parameter $\p$},
        axis on top,
      ]
      \addplot graphics [
        xmin=\xmin, xmax=\xmax,
        ymin=\ymin, ymax=\ymax,
      ] {code/results/figures/#2/#4/surf_H2L2_GRANSO_new_structure};
    \end{semilogxaxis}
    \begin{semilogxaxis}[%
        name=surf EABC,
        at={(surf BC.south west)},
        yshift=-1em,
        anchor=north west,
        view={0}{90},
        width=\surfsize, 
        height=\surfsize,
        scale only axis,
        enlargelimits=false,
        xlabel={Frequency $\omega$ [rad/s]},
        ylabel style={align=center},
        ylabel={\textbf{Opt. ROM$_{\text{All}}$} \\Parameter $\p$},
        axis on top,
      ]
      \addplot graphics [
        xmin=\xmin, xmax=\xmax,
        ymin=\ymin, ymax=\ymax,
      ] {code/results/figures/#3/#4/surf_H2L2_GRANSO_new_structure};
    \end{semilogxaxis}
    \begin{axis}[%
        poles pirka pos,
        poles #1,
        xlabel={},
        xticklabels={,,},
        point meta min=\ymin,
        point meta max=\ymax,
        poles colorbar,
        axis on top,
      ]
      \addplot graphics [
        poles #1 limits,
      ] {code/results/figures/#1/#4/poles_pIRKA.pdf};
    \end{axis}
    \begin{axis}[%
        poles granso pos,
        poles #1,
        xlabel={},
        xticklabels={,,},
        axis on top,
      ]
      \addplot graphics [
        poles #1 limits,
      ] {code/results/figures/#1/#4/poles_GRANSO.pdf};
    \end{axis}
    \begin{axis}[%
        poles BC pos,
        poles #1,
        axis on top,
        xlabel={},
        xticklabels={,,},
      ]
      \addplot graphics [
         poles #1 limits,
      ] {code/results/figures/#2/#4/poles_GRANSO_new_structure.pdf};
    \end{axis}
    \begin{axis}[%
        poles EABC pos,
        poles #1,
        axis on top,
      ]
      \addplot graphics [
        poles #1 limits,
      ] {code/results/figures/#3/#4/poles_GRANSO_new_structure.pdf};
    \end{axis}
  \end{tikzpicture}
}
\newcommand{\errparamextended}[4]{%
   \tikzsetnextfilename{err_param_#1}
  \begin{tikzpicture}
    \pgfplotstableread{code/results/figures/#1/#4/piecewise_H2_#1.dat}{\errparamtable}
    \pgfplotstableread{code/results/figures/#2/#4/piecewise_H2_#2.dat}{\errparamtableBC}
    \pgfplotstableread{code/results/figures/#3/#4/piecewise_H2_#3.dat}{\errparamtableEABC}

    \begin{semilogyaxis}[%
        width=0.666\textwidth,
        height=0.1\textheight,
        log basis y=10,
        scale only axis,
        xlabel={Parameter $\p$},
        ylabel={$\varepsilon_{\p}$},
        legend style={
          at={(1.02, 1)},
          anchor=north west,
          font=\footnotesize,
        },
        legend cell align={left},
       legend entries={pIRKA ROM,
         Opt. ROM$_{\text{SP}}$,
         Opt. ROM$_{\text{IO}}$, 
         Opt. ROM$_{\text{All}}$,
         IRKA ROMs},
        cycle list name=hund_five,
        grid=both,
        errs_#1,
      ]

        \addplot table[x index=0, y index=1] {\errparamtable};
        \addplot table[x index=0, y index=2] {\errparamtable};
        \addplot table[x index=0, y index=2] {\errparamtableBC};
        \addplot table[x index=0, y index=2] {\errparamtableEABC};
        \addplot table[x index=0, y index=3] {\errparamtable};
    \end{semilogyaxis}
  \end{tikzpicture}
}
\pgfplotsset{%
  colormap={dusk}{%
    rgb=(0, 0, 0)
    rgb=(0.0077, 0.0040, 0.0152)
    rgb=(0.0156, 0.0079, 0.0303)
    rgb=(0.0234, 0.0121, 0.0453)
    rgb=(0.0311, 0.0160, 0.0580)
    rgb=(0.0394, 0.0199, 0.0690)
    rgb=(0.0470, 0.0238, 0.0786)
    rgb=(0.0539, 0.0277, 0.0875)
    rgb=(0.0604, 0.0315, 0.0956)
    rgb=(0.0662, 0.0356, 0.1032)
    rgb=(0.0714, 0.0395, 0.1109)
    rgb=(0.0758, 0.0433, 0.1187)
    rgb=(0.0796, 0.0471, 0.1265)
    rgb=(0.0830, 0.0507, 0.1345)
    rgb=(0.0859, 0.0541, 0.1425)
    rgb=(0.0884, 0.0574, 0.1506)
    rgb=(0.0910, 0.0605, 0.1587)
    rgb=(0.0937, 0.0632, 0.1669)
    rgb=(0.0967, 0.0655, 0.1751)
    rgb=(0.0999, 0.0677, 0.1834)
    rgb=(0.1033, 0.0695, 0.1917)
    rgb=(0.1068, 0.0711, 0.2000)
    rgb=(0.1104, 0.0726, 0.2085)
    rgb=(0.1139, 0.0742, 0.2169)
    rgb=(0.1176, 0.0756, 0.2253)
    rgb=(0.1213, 0.0770, 0.2338)
    rgb=(0.1252, 0.0784, 0.2422)
    rgb=(0.1292, 0.0798, 0.2506)
    rgb=(0.1333, 0.0811, 0.2591)
    rgb=(0.1375, 0.0823, 0.2675)
    rgb=(0.1414, 0.0836, 0.2761)
    rgb=(0.1454, 0.0848, 0.2847)
    rgb=(0.1494, 0.0860, 0.2933)
    rgb=(0.1536, 0.0872, 0.3020)
    rgb=(0.1580, 0.0882, 0.3106)
    rgb=(0.1625, 0.0892, 0.3191)
    rgb=(0.1672, 0.0901, 0.3276)
    rgb=(0.1719, 0.0909, 0.3362)
    rgb=(0.1769, 0.0917, 0.3447)
    rgb=(0.1820, 0.0923, 0.3532)
    rgb=(0.1873, 0.0928, 0.3616)
    rgb=(0.1927, 0.0931, 0.3701)
    rgb=(0.1973, 0.0939, 0.3789)
    rgb=(0.2017, 0.0947, 0.3879)
    rgb=(0.2060, 0.0955, 0.3969)
    rgb=(0.2101, 0.0964, 0.4061)
    rgb=(0.2138, 0.0975, 0.4155)
    rgb=(0.2171, 0.0987, 0.4250)
    rgb=(0.2198, 0.1002, 0.4348)
    rgb=(0.2221, 0.1018, 0.4447)
    rgb=(0.2198, 0.1060, 0.4556)
    rgb=(0.2148, 0.1116, 0.4666)
    rgb=(0.2088, 0.1173, 0.4776)
    rgb=(0.2016, 0.1234, 0.4885)
    rgb=(0.1932, 0.1297, 0.4993)
    rgb=(0.1832, 0.1362, 0.5100)
    rgb=(0.1717, 0.1429, 0.5204)
    rgb=(0.1583, 0.1498, 0.5307)
    rgb=(0.1432, 0.1571, 0.5391)
    rgb=(0.1293, 0.1648, 0.5446)
    rgb=(0.1143, 0.1723, 0.5496)
    rgb=(0.0983, 0.1795, 0.5541)
    rgb=(0.0810, 0.1866, 0.5582)
    rgb=(0.0614, 0.1934, 0.5618)
    rgb=(0.0381, 0.2000, 0.5650)
    rgb=(0.0127, 0.2066, 0.5680)
    rgb=(0, 0.2129, 0.5704)
    rgb=(0, 0.2189, 0.5718)
    rgb=(0, 0.2247, 0.5731)
    rgb=(0, 0.2305, 0.5742)
    rgb=(0, 0.2363, 0.5752)
    rgb=(0, 0.2420, 0.5764)
    rgb=(0, 0.2476, 0.5774)
    rgb=(0, 0.2533, 0.5784)
    rgb=(0, 0.2588, 0.5793)
    rgb=(0, 0.2643, 0.5798)
    rgb=(0, 0.2697, 0.5803)
    rgb=(0, 0.2750, 0.5807)
    rgb=(0, 0.2804, 0.5811)
    rgb=(0, 0.2857, 0.5816)
    rgb=(0, 0.2911, 0.5820)
    rgb=(0, 0.2964, 0.5825)
    rgb=(0, 0.3017, 0.5829)
    rgb=(0, 0.3069, 0.5829)
    rgb=(0, 0.3120, 0.5828)
    rgb=(0, 0.3171, 0.5827)
    rgb=(0, 0.3223, 0.5826)
    rgb=(0, 0.3274, 0.5825)
    rgb=(0, 0.3325, 0.5824)
    rgb=(0, 0.3377, 0.5823)
    rgb=(0, 0.3428, 0.5822)
    rgb=(0, 0.3478, 0.5817)
    rgb=(0, 0.3527, 0.5808)
    rgb=(0, 0.3576, 0.5800)
    rgb=(0, 0.3625, 0.5791)
    rgb=(0, 0.3675, 0.5782)
    rgb=(0, 0.3724, 0.5773)
    rgb=(0, 0.3773, 0.5764)
    rgb=(0, 0.3822, 0.5755)
    rgb=(0, 0.3870, 0.5743)
    rgb=(0, 0.3916, 0.5722)
    rgb=(0, 0.3962, 0.5701)
    rgb=(0, 0.4007, 0.5680)
    rgb=(0, 0.4052, 0.5658)
    rgb=(0, 0.4098, 0.5636)
    rgb=(0, 0.4143, 0.5612)
    rgb=(0, 0.4188, 0.5588)
    rgb=(0, 0.4232, 0.5563)
    rgb=(0, 0.4270, 0.5528)
    rgb=(0, 0.4308, 0.5493)
    rgb=(0.0311, 0.4346, 0.5458)
    rgb=(0.0736, 0.4383, 0.5421)
    rgb=(0.1039, 0.4420, 0.5385)
    rgb=(0.1290, 0.4455, 0.5348)
    rgb=(0.1512, 0.4490, 0.5311)
    rgb=(0.1714, 0.4525, 0.5274)
    rgb=(0.1953, 0.4552, 0.5245)
    rgb=(0.2178, 0.4578, 0.5218)
    rgb=(0.2386, 0.4603, 0.5194)
    rgb=(0.2582, 0.4627, 0.5171)
    rgb=(0.2769, 0.4650, 0.5151)
    rgb=(0.2947, 0.4672, 0.5134)
    rgb=(0.3119, 0.4694, 0.5118)
    rgb=(0.3285, 0.4715, 0.5105)
    rgb=(0.3446, 0.4735, 0.5095)
    rgb=(0.3603, 0.4755, 0.5087)
    rgb=(0.3757, 0.4774, 0.5082)
    rgb=(0.3906, 0.4792, 0.5079)
    rgb=(0.4053, 0.4810, 0.5079)
    rgb=(0.4197, 0.4827, 0.5081)
    rgb=(0.4339, 0.4844, 0.5085)
    rgb=(0.4478, 0.4860, 0.5092)
    rgb=(0.4615, 0.4875, 0.5100)
    rgb=(0.4750, 0.4890, 0.5110)
    rgb=(0.4884, 0.4904, 0.5121)
    rgb=(0.5016, 0.4918, 0.5135)
    rgb=(0.5147, 0.4932, 0.5148)
    rgb=(0.5277, 0.4945, 0.5161)
    rgb=(0.5405, 0.4958, 0.5175)
    rgb=(0.5533, 0.4970, 0.5190)
    rgb=(0.5659, 0.4982, 0.5205)
    rgb=(0.5785, 0.4994, 0.5220)
    rgb=(0.5910, 0.5005, 0.5235)
    rgb=(0.6034, 0.5016, 0.5249)
    rgb=(0.6158, 0.5027, 0.5264)
    rgb=(0.6281, 0.5037, 0.5278)
    rgb=(0.6403, 0.5047, 0.5292)
    rgb=(0.6525, 0.5056, 0.5305)
    rgb=(0.6647, 0.5065, 0.5317)
    rgb=(0.6768, 0.5074, 0.5329)
    rgb=(0.6888, 0.5083, 0.5340)
    rgb=(0.7008, 0.5092, 0.5349)
    rgb=(0.7128, 0.5100, 0.5357)
    rgb=(0.7247, 0.5108, 0.5364)
    rgb=(0.7365, 0.5116, 0.5369)
    rgb=(0.7483, 0.5124, 0.5372)
    rgb=(0.7599, 0.5133, 0.5373)
    rgb=(0.7716, 0.5141, 0.5372)
    rgb=(0.7831, 0.5149, 0.5369)
    rgb=(0.7945, 0.5158, 0.5364)
    rgb=(0.8058, 0.5167, 0.5357)
    rgb=(0.8170, 0.5177, 0.5347)
    rgb=(0.8280, 0.5187, 0.5335)
    rgb=(0.8389, 0.5197, 0.5322)
    rgb=(0.8497, 0.5209, 0.5305)
    rgb=(0.8602, 0.5222, 0.5285)
    rgb=(0.8703, 0.5237, 0.5260)
    rgb=(0.8803, 0.5253, 0.5234)
    rgb=(0.8901, 0.5270, 0.5205)
    rgb=(0.8996, 0.5289, 0.5175)
    rgb=(0.9089, 0.5308, 0.5143)
    rgb=(0.9180, 0.5329, 0.5110)
    rgb=(0.9269, 0.5351, 0.5075)
    rgb=(0.9347, 0.5381, 0.5036)
    rgb=(0.9388, 0.5432, 0.4996)
    rgb=(0.9428, 0.5484, 0.4957)
    rgb=(0.9466, 0.5537, 0.4918)
    rgb=(0.9501, 0.5592, 0.4880)
    rgb=(0.9536, 0.5647, 0.4842)
    rgb=(0.9568, 0.5702, 0.4805)
    rgb=(0.9599, 0.5759, 0.4767)
    rgb=(0.9629, 0.5816, 0.4730)
    rgb=(0.9637, 0.5884, 0.4711)
    rgb=(0.9644, 0.5952, 0.4691)
    rgb=(0.9650, 0.6020, 0.4672)
    rgb=(0.9655, 0.6088, 0.4654)
    rgb=(0.9661, 0.6155, 0.4634)
    rgb=(0.9666, 0.6223, 0.4615)
    rgb=(0.9670, 0.6291, 0.4597)
    rgb=(0.9673, 0.6359, 0.4579)
    rgb=(0.9674, 0.6427, 0.4564)
    rgb=(0.9673, 0.6496, 0.4551)
    rgb=(0.9671, 0.6565, 0.4538)
    rgb=(0.9668, 0.6634, 0.4526)
    rgb=(0.9664, 0.6703, 0.4515)
    rgb=(0.9659, 0.6773, 0.4505)
    rgb=(0.9652, 0.6842, 0.4496)
    rgb=(0.9645, 0.6911, 0.4488)
    rgb=(0.9637, 0.6981, 0.4482)
    rgb=(0.9626, 0.7051, 0.4478)
    rgb=(0.9615, 0.7121, 0.4476)
    rgb=(0.9602, 0.7191, 0.4474)
    rgb=(0.9589, 0.7261, 0.4475)
    rgb=(0.9574, 0.7331, 0.4477)
    rgb=(0.9558, 0.7401, 0.4480)
    rgb=(0.9540, 0.7471, 0.4486)
    rgb=(0.9520, 0.7542, 0.4498)
    rgb=(0.9494, 0.7613, 0.4522)
    rgb=(0.9468, 0.7685, 0.4548)
    rgb=(0.9441, 0.7756, 0.4576)
    rgb=(0.9412, 0.7828, 0.4608)
    rgb=(0.9382, 0.7899, 0.4643)
    rgb=(0.9351, 0.7970, 0.4680)
    rgb=(0.9318, 0.8041, 0.4721)
    rgb=(0.9285, 0.8112, 0.4771)
    rgb=(0.9267, 0.8173, 0.4883)
    rgb=(0.9250, 0.8234, 0.4995)
    rgb=(0.9233, 0.8294, 0.5109)
    rgb=(0.9219, 0.8353, 0.5225)
    rgb=(0.9206, 0.8412, 0.5344)
    rgb=(0.9194, 0.8469, 0.5464)
    rgb=(0.9184, 0.8526, 0.5585)
    rgb=(0.9174, 0.8582, 0.5708)
    rgb=(0.9177, 0.8634, 0.5836)
    rgb=(0.9182, 0.8685, 0.5965)
    rgb=(0.9188, 0.8735, 0.6094)
    rgb=(0.9196, 0.8784, 0.6224)
    rgb=(0.9205, 0.8833, 0.6354)
    rgb=(0.9216, 0.8881, 0.6484)
    rgb=(0.9228, 0.8929, 0.6615)
    rgb=(0.9242, 0.8976, 0.6746)
    rgb=(0.9260, 0.9021, 0.6876)
    rgb=(0.9280, 0.9066, 0.7007)
    rgb=(0.9302, 0.9110, 0.7138)
    rgb=(0.9325, 0.9154, 0.7269)
    rgb=(0.9349, 0.9197, 0.7400)
    rgb=(0.9374, 0.9240, 0.7530)
    rgb=(0.9400, 0.9283, 0.7661)
    rgb=(0.9427, 0.9325, 0.7791)
    rgb=(0.9455, 0.9367, 0.7922)
    rgb=(0.9486, 0.9408, 0.8052)
    rgb=(0.9518, 0.9449, 0.8182)
    rgb=(0.9550, 0.9489, 0.8312)
    rgb=(0.9583, 0.9529, 0.8441)
    rgb=(0.9616, 0.9569, 0.8571)
    rgb=(0.9649, 0.9609, 0.8701)
    rgb=(0.9683, 0.9649, 0.8831)
    rgb=(0.9718, 0.9688, 0.8960)
    rgb=(0.9753, 0.9728, 0.9090)
    rgb=(0.9788, 0.9767, 0.9220)
    rgb=(0.9824, 0.9806, 0.9350)
    rgb=(0.9860, 0.9844, 0.9480)
    rgb=(0.9894, 0.9884, 0.9610)
    rgb=(0.9929, 0.9923, 0.9740)
    rgb=(0.9965, 0.9961, 0.9870)
    rgb=(1, 1, 1)
  },
}
\tikzstyle{hund_common}=[
\tikzstyle{hund_thicker_common}=[
\tikzstyle{hund_three_common}=[
\newlength{\surfsize}
\newlength{\propsize}
\pgfplotsset{
  every tick label/.append style={font=\footnotesize},
  scaled ticks=false,
  ticklabel style={
    /pgf/number format/fixed,
    /pgf/number format/precision=5,
  },
  colorbar top/.style={
    colorbar horizontal,
    colorbar style={
      at={(0.5, 1.05)},
      anchor=south,
      xticklabel pos=upper,
      ylabel style={rotate=-90},
    },
  },
  surf fom colorbar/.style={
    colorbar top,
    colormap/bone,
    colorbar style={
      xticklabel=$10^{\pgfmathparse{\tick}\pgfmathprintnumber\pgfmathresult}$,
      ylabel=$\abs{H(\imag \omega; \p)}$,
    },
  },
  surf err colorbar/.style={
    colorbar top,
    colormap name=dusk,
    colorbar style={
      xticklabel=$10^{\pgfmathparse{\tick}\pgfmathprintnumber\pgfmathresult}$,
      ylabel=\(\varepsilon_{\omega, \p}\),
    },
  },
  poles colorbar/.style={
    colorbar top,
    colormap/viridis,
    colorbar style={
      ylabel=\(\p\),
    },
  },
  poles/.style={
    width=\propsize,
    height=\propsize,
    scale only axis,
    enlargelimits=false,
    xlabel={Real part},
    ylabel={Imaginary part},
  },
  poles Synthetic/.style={
    poles,
    x tick label style={/pgf/number format/1000 sep=},
    y tick label style={/pgf/number format/1000 sep=},
    ylabel shift=-2.5ex,
  },
  plot graphics/poles Synthetic limits/.style={
    xmin=-1000,
    xmax=0,
    ymin=-1000,
    ymax=1000,
  },
  poles Penzl/.style={
    poles,
    xtick={-3, -2, -1, 0},
    xticklabels={$-10^3$, $-10^2$, $-10^1$, $-10^0$},
  },
  plot graphics/poles Penzl limits/.style={
    xmin=-3.3,
    xmax=0.3,
    ymin=-480,
    ymax=480,
  },
  poles TripleChainParametricDamping/.style={
    poles,
  },
  plot graphics/poles TripleChainParametricDamping limits/.style={
    xmin=-0.011,
    xmax=0,
    ymin=-0.02,
    ymax=0.02,
  },
  poles pirka pos/.style={
    at={(surf pirka.north east)},
    xshift=5em,
    anchor=north west,
  },
  poles granso pos/.style={
    at={(surf granso.north east)},
    xshift=5em,
    anchor=north west,
  },
  poles BC pos/.style={
    at={(surf BC.north east)},
    xshift=5em,
    anchor=north west,
  },
  poles EABC pos/.style={
    at={(surf EABC.north east)},
    xshift=5em,
    anchor=north west,
  },
  poles fom pos/.style={
    at={(surf fom.north east)},
    xshift=5em,
    anchor=north west,
  },
  errs_Synthetic/.style={
    xmin=-0.05,
    xmax=1.05,
    minor x tick num=1,
    ymin=1e-9,
    ymax=10,
    ytickten={-8,-7,...,0},
    yticklabels={$10^{-8}$, , $10^{-6}$, , $10^{-4}$, , $10^{-2}$, , $10^{0}$},
  },
  errs_Penzl/.style={
    xmin=5,
    xmax=105,
    minor x tick num=1,
    ymin=1e-5,
    ymax=1,
    ytickten={-5,-4,...,0},
  },
  errs_TripleChainParametricDamping/.style={
    xmin=0.0012,
    xmax=0.0208,
    xtick={0,0.004, 0.008, 0.012, 0.016, 0.02},
    minor x tick num=1,
    ymin=1e-2,
    ymax=10,
    ytickten={-2,-1,0,1,2},
  },
}
\newcommand{\granso}{GRANSO}
\newcommand{\matlab}{MATLAB}
\newcommand{\mmess}{\mbox{M-M.E.S.S.}}
\DeclareMathOperator{\dif}{d\!}
\newcommand{\p}{\mathsf{p}}
\newcommand{\pset}{\mathcal{P}}
\newcommand{\measure}{\mu}
\newcommand{\chA}{\check{A}}
\newcommand{\chB}{\check{B}}
\newcommand{\chC}{\check{C}}
\newcommand{\chE}{\check{E}}
\newcommand{\hA}{\hat{A}}
\newcommand{\hB}{\hat{B}}
\newcommand{\hC}{\hat{C}}
\newcommand{\hE}{\hat{E}}
\newcommand{\hG}{\hat{G}}
\newcommand{\hH}{\hat{H}}
\newcommand{\hP}{\hat{P}}
\newcommand{\hQ}{\hat{Q}}
\newcommand{\hSigma}{\hat{\Sigma}}
\newcommand{\hV}{\hat{V}}
\newcommand{\hW}{\hat{W}}
\newcommand{\ha}{\hat{a}}
\newcommand{\hb}{\hat{b}}
\newcommand{\hc}{\hat{c}}
\newcommand{\he}{\hat{e}}
\newcommand{\hx}{\hat{x}}
\newcommand{\hy}{\hat{y}}
\newcommand{\cB}{\mathcal{B}}
\newcommand{\cC}{\mathcal{C}}
\newcommand{\cD}{\mathcal{D}}
\newcommand{\cH}{\mathcal{H}}
\newcommand{\cJ}{\mathcal{J}}
\newcommand{\cK}{\mathcal{K}}
\newcommand{\cL}{\mathcal{L}}
\newcommand{\cM}{\mathcal{M}}
\newcommand{\cR}{\mathcal{R}}
\newcommand{\tP}{\tilde{P}}
\newcommand{\tQ}{\tilde{Q}}
\newcommand{\tV}{\tilde{V}}
\newcommand{\tW}{\tilde{W}}
\newcommand{\tcJ}{\tilde{\cJ}}
\newcommand{\Ap}{A(\p)}
\newcommand{\Bp}{B(\p)}
\newcommand{\Cp}{C(\p)}
\newcommand{\Ep}{E(\p)}
\newcommand{\Pp}{P(\p)}
\newcommand{\Qp}{Q(\p)}
\newcommand{\Sigmap}{\Sigma_{\pset}}
\newcommand{\cDp}{\cD(\p)}
\newcommand{\hAp}{\hA(\p)}
\newcommand{\hBp}{\hB(\p)}
\newcommand{\hCp}{\hC(\p)}
\newcommand{\hEp}{\hE(\p)}
\newcommand{\hPp}{\hP(\p)}
\newcommand{\hQp}{\hQ(\p)}
\newcommand{\hSigmap}{\hSigma_{\pset}}
\newcommand{\tPp}{\tP(\p)}
\newcommand{\tQp}{\tQ(\p)}
\newcommand{\Htwo}{\cH_2}
\newcommand{\Ltwo}{\cL_2}
\newcommand{\Linf}{\cL_{\infty}}
\newcommand{\HoL}{{\Htwo \otimes \Ltwo}}
\newcommand{\intp}{\int_{\pset}}
\newcommand{\intz}{\int_{0}^{1}}
\newcommand{\bbR}{\mathbb{R}}
\newcommand{\Rn}{\bbR^{n}}
\newcommand{\Rm}{\bbR^{m}}
\newcommand{\Rp}{\bbR^{p}}
\newcommand{\Rd}{\bbR^{d}}
\newcommand{\Rnn}{\bbR^{n \times n}}
\newcommand{\Rnr}{\bbR^{n \times r}}
\newcommand{\Rnm}{\bbR^{n \times m}}
\newcommand{\Rpn}{\bbR^{p \times n}}
\newcommand{\Rrr}{\bbR^{r \times r}}
\newcommand{\Rrm}{\bbR^{r \times m}}
\newcommand{\Rpr}{\bbR^{p \times r}}
\newcommand{\bbC}{\mathbb{C}}
\newcommand{\Cpm}{\bbC^{p \times m}}
\newcommand{\bJ}{\cJ_{\mathrm{s}}}
\newcommand{\imag}{\mathbf{i}}
\newcommand{\tran}{\ensuremath{^{\mathsf{T}}}}
\newcommand{\mtran}{\ensuremath{^{-\!\mathsf{T}}}}
\newcommand{\fundef}[3]{#1 \colon #2 \to #3}
\DeclarePairedDelimiter{\myparen}{\lparen}{\rparen}
\DeclarePairedDelimiter{\mybrack}{\lbrack}{\rbrack}
\DeclarePairedDelimiter{\card}{\lvert}{\rvert}
\DeclarePairedDelimiter{\abs}{\lvert}{\rvert}
\DeclarePairedDelimiter{\norm}{\lVert}{\rVert}
\DeclarePairedDelimiterXPP{\normHtwo}[1]{}{\lVert}{\rVert}{_{\Htwo}}{#1}
\DeclarePairedDelimiterXPP{\normLtwo}[1]{}{\lVert}{\rVert}{_{\Ltwo}}{#1}
\DeclarePairedDelimiterXPP{\normLinf}[1]{}{\lVert}{\rVert}{_{\Linf}}{#1}
\DeclarePairedDelimiterXPP{\normHoL}[1]{}{\lVert}{\rVert}{_{\HoL}}{#1}
\DeclarePairedDelimiterXPP{\normF}[1]{}{\lVert}{\rVert}{_{\operatorname{F}}}{#1}
\DeclarePairedDelimiterXPP{\ip}[1]{}{\langle}{\rangle}{}{#1}
\DeclarePairedDelimiterXPP{\ipF}[2]{}{\langle}{\rangle}{_{\operatorname{F}}}{#1, #2}
\DeclarePairedDelimiterXPP{\Real}[1]{\operatorname{Re}}{\lparen}{\rparen}{}{#1}
\DeclarePairedDelimiterXPP{\eig}[1]{\Lambda}{\lparen}{\rparen}{}{#1}
\DeclarePairedDelimiterXPP{\lo}[1]{o}{\lparen}{\rparen}{}{#1}
\DeclarePairedDelimiterXPP{\bo}[1]{\mathcal{O}}{\lparen}{\rparen}{}{#1}
\DeclarePairedDelimiterXPP{\myvec}[1]{\operatorname{vec}}{\lparen}{\rparen}{}{#1}
\DeclarePairedDelimiterXPP{\spab}[1]{\alpha}{\lparen}{\rparen}{}{#1}
\DeclarePairedDelimiterXPP{\trace}[1]{\operatorname{tr}}{\lparen}{\rparen}{}{#1}
\let\hat\widehat%
\let\tilde\widetilde%
\let\check\widecheck%
\definecolor{mycolor1}{HTML}{1F77B4}
\definecolor{mycolor2}{HTML}{FF7F0E}
\definecolor{mycolor3}{HTML}{2CA02C}
\definecolor{mycolor4}{HTML}{D62728}
\definecolor{mycolor5}{HTML}{9467BD}
\definecolor{mycolor6}{HTML}{8C564B}
\definecolor{mycolor7}{HTML}{E377C2}
\definecolor{mycolor8}{HTML}{7F7F7F}
\definecolor{mycolor9}{HTML}{BCBD22}
\definecolor{mycolor10}{HTML}{17BECF}
\renewcommand{\todo}[2][]{\tikzexternaldisable\@todo[#1]{#2}\tikzexternalenable}
\newcommandx{\manu}[2][1=]{\todo[linecolor=magenta,size=\tiny,
backgroundcolor=magenta!25,bordercolor=magenta,#1]{MH: #2}}
\newcommandx{\jens}[2][1=]{\todo[linecolor=black,size=\tiny,
backgroundcolor=black!25,bordercolor=black,#1]{JS: #2}}
\newcommandx{\petar}[2][1=]{\todo[linecolor=blue,size=\tiny,
backgroundcolor=blue!25,bordercolor=blue,#1]{PM: #2}}
\newcommandx{\tim}[2][1=]{\todo[linecolor=red,backgroundcolor=red!25,
bordercolor=red,size=\tiny,#1]{TM: #2}}
\begin{document}
\title{Optimization-based parametric model order reduction via
  $\HoL$ first-order necessary conditions}

\author[$\dagger$]{Manuela~Hund}
\author[$\dagger$]{Tim~Mitchell}
\author[$\dagger$]{Petar~Mlinari\'{c}}
\author[$\dagger$]{Jens~Saak}
\affil[$\dagger$]{%
  Max Planck Institute for Dynamics of Complex Technical Systems,
  Sandtorstra{\ss}e 1, 39106 Magdeburg, Germany\authorcr{}
  (\email{hund@mpi-magdeburg.mpg.de}, \orcid{0000-0003-2888-3717},
  \email{mitchell@mpi-magdeburg.mpg.de}, \orcid{0000-0002-8426-0242},
  \email{mlinaric@mpi-magdeburg.mpg.de}, \orcid{0000-0002-9437-7698},
  \email{saak@mpi-magdeburg.mpg.de}, \orcid{0000-0001-5567-9637})
}
\shorttitle{$\HoL$ Parametric MOR via Optimization}
\shortauthor{M. Hund, T. Mitchell, P. Mlinari\'{c}, and J. Saak}
\shortdate{2021-03-04}

\abstract{%
  In this paper, we generalize existing frameworks for $\HoL$-optimal model
  order reduction to a broad class of parametric linear time-invariant systems.
  To this end, we derive first-order necessary optimality conditions for a class
  of structured reduced-order models, and then building on those,
  propose a stability-preserving optimization-based method for computing locally
  $\HoL$-optimal reduced-order models.
  We also make a theoretical comparison to existing approaches in the literature,
  and in numerical experiments, 
  show how our new method, with reasonable computational effort, 
  produces stable optimized reduced-order models with significantly lower 
  approximation errors.
}

\keywords{%
  parametric MOR,
  Wilson conditions,
  H2xL2 gradient,
  optimization-derived ROMs
}

\msc{
  15A24, 
  46N10, 
  65K05, 
  65Y20, 
  93A15, 
  93B40 
}

\maketitle


\section{Introduction}
Given parameters $\p = \myparen{\p_1, \dots, \p_d} \in \pset \subset \Rd$,
we consider parametric linear time-invariant systems
with time-domain state-space realizations
\begin{equation}
  \label{eq:pLTI}
  \begin{aligned}
    \Ep \dot{x}(t; \p) & = \Ap x(t; \p) + \Bp u(t), \quad x(0; \p) = 0, \\
    y(t; \p) & = \Cp x(t; \p),
  \end{aligned}
\end{equation}
where $t \geq 0$ is the time and
$H$ is the associated (parameterized) transfer function in the Laplace domain,
i.e., $H(s; \p) = \Cp \myparen{s \Ep - \Ap}^{-1} \Bp \in \Cpm$
with $s \in \bbC$ being the Laplace variable.
Matrices $\Ep, \Ap \in \Rnn$ describe the system dynamics,
while $\Bp \in \Rnm$ is the input matrix,
$\Cp \in \Rpn$ the output matrix,
$x(t; \p) \in \Rn$ the state,
$u(t) \in \Rm$ the input, and
$y(t; \p) \in \Rp$ the output.
For all parameter values $\p \in \pset$,
we assume that
$\Ep$ is invertible and
system~\eqref{eq:pLTI} is \emph{asymptotically stable}, i.e.,
the set of eigenvalues of the matrix pencil $\lambda \Ep - \Ap$,
which we denote as $\eig{\Ap, \Ep}$,
is in the open left half of the complex plane.
Defining
\begin{equation*}
  \spab{\Ap, \Ep} \coloneqq
  \max\{\Real{\lambda} : \lambda \in \eig{\Ap, \Ep}\},
\end{equation*}
i.e., the \emph{spectral abscissa} of matrix pencil $\lambda \Ep - \Ap$,
the stability assumption can equivalently be written as
$\max_{\p \in \pset} \spab{\Ap, \Ep} < 0$ under mild assumptions (see~\cref{thm:assumptions}
in~\cref{sec:preliminaries}).

Such parametric systems arise in applications where the parameters are used to
describe things like geometric or physical properties.
In practice, for fidelity, the state-space dimension $n$ is typically large, and
so corresponding numerical calculations can be prohibitively expensive, even in
the non-parametric case.
One approach to dealing with this is parametric model order reduction (MOR), i.e.,
constructing a parametric reduced-order model (ROM)
\begin{equation}
  \label{eq:pLTIr}
  \begin{aligned}
    \hEp \dot{\hx}(t; \p) & = \hAp \hx(t; \p) + \hBp u(t), \quad
    \hx(0; \p) = 0, \\
    \hy(t; \p) & = \hCp \hx(t; \p),
  \end{aligned}
\end{equation}
that has similar dynamical behavior to the full-order model
(FOM)~\eqref{eq:pLTI} across the entire parameter domain $\pset$ but with
a much smaller dimension $r \ll n$ and
matrix-valued functions $\hE, \hA, \hB, \hC$ that are cheap to evaluate.
For~\eqref{eq:pLTIr},
$\hEp, \hAp \in \Rrr$,
$\hBp \in \Rrm$ and
$\hCp \in \Rpr$,
and its associated transfer function $\hH$ is such that
$\hH(s; \p) = \hCp \myparen{s\hEp - \hAp}^{-1} \hBp \in \Cpm$.
Therefore,
such a ROM can be simulated faster over the entire parameter domain $\pset$.
As a shorthand,
we denote systems~\eqref{eq:pLTI} and~\eqref{eq:pLTIr} by
$\Sigmap \coloneqq (E, A, B, C)$ and
$\hSigmap \coloneqq (\hE, \hA, \hB, \hC)$,
respectively.

In the simpler setting of \emph{non-parametric} $\Htwo$-optimal MOR,
a ROM's quality is measured via the Hardy $\Htwo$ norm in the $s$ variable,
i.e., the average discrepancy in the frequency domain between the FOM and ROM is
measured.
The two most well-known algorithms for $\Htwo$-optimal MOR are
the iterative rational Krylov algorithm (IRKA)~\cite{morGugAB08} and
the two-sided iteration algorithm (TSIA)~\cite{morXuZ11}.
Respectively, they are based on
the interpolatory first-order necessary conditions (FONC) from Meier and
Luenberger~\cite{morMeiL67} and
Gramian-based FONC from Wilson~\cite{morWil70}, which are respectively
called the Meier-Luenberger and Wilson conditions; for more details,
see~\cite{morGugAB08,morVanGA08,morXuZ11,morBenKS11}.

For \emph{parametric} MOR,
many approaches have been developed over the years;
see~\cite{morBenGW15,morBauBHetal17} for comprehensive surveys.
Our focus in this paper is $\HoL$-optimal parametric MOR, which was originally
proposed by Baur et al.~\cite{morBauBBetal11}.
Compared to non-parametric $\Htwo$-optimal MOR, $\HoL$-optimal MOR
additionally averages the error in the $\p$ variable
over the parameter domain.
In~\cite{morBauBBetal11}, $\HoL$-optimal parametric MOR was done under very
special assumptions, which include requiring that $E, A, \hE, \hA$ are all
constant functions of the parameter.
Another $\HoL$-optimal framework was proposed in~\cite[Chapter~$3$]{morGri18},
where Grimm modified the $\HoL$ norm considered
in~\cite{morBauBBetal11} so that FOMs such as~\eqref{eq:pLTI} are considered,
but all matrix-valued functions are assumed to be analytic
and $\hE$ and $\hA$ remain non-parametric.
Meanwhile, in~\cite[Chapter~$4$]{morPet13},
Petersson considered a discretized $\HoL$-norm optimization problem.
Here, we extend the approach of Baur et al.~\cite{morBauBBetal11} to general
parametric systems, in particular,
where all coefficients can actually vary with the parameter.

Many parametric MOR approaches build ROMs by projecting the FOM\@.
One disadvantage of this is that the FOM should
have a structure that allows the projected ROM's matrix-valued functions to be
evaluated efficiently.
If such a structure is not apparent,
then the parameter-dependent matrices of the FOM have to be approximated by, say,
empirical interpolation (see~\cite[Chapter~$2$]{morBenOCetal17}),
in order to avoid the cost of reprojecting the FOM for each new value of $\p$.
One notable approach amongst projection-based parametric MOR techniques is that
of reduced basis methods~\cite{morHesRS16,morQuaMN16}.
These aim to minimize the $\Linf$ error over the parameter domain\footnote{%
  For time-dependent problems,
  the error is additionally combined with the $\Ltwo$ error in time,
  or the $\Linf$ error in frequency if $s$ is thought of as a parameter.
}
in a greedy manner,
where the ``reduced basis'' is augmented in each step using certain solution
snapshots. To that end, an error estimator is evaluated on a
predefined training set of parameter values, and the snapshots are collected from a simulation of the FOM at the parameter value that attains the
worst (highest) estimated error. This procedure is repeated in a loop until all
estimates become sufficiently small.
In contrast, the method that we propose aims to minimize the $\HoL$ error over
the entire frequency and parameter domain by direct optimization,
where the optimization variables are the matrices defining the ROM\@.
Consequently, unlike the methods above, our approach avoids projection.

In \cref{sec:preliminaries},
we review both standard non-parametric $\Htwo$-optimal MOR
and parametric $\HoL$-optimal MOR, and then
give various definitions and results that we will need here.
Our main theoretical result,
the derivation of the gradient and Wilson-type FONC for our generalized
$\HoL$-optimal framework,
is given in~\cref{sec:fonc}.
In~\cref{sec:related_work},
we show how our new results differ with respect to the earlier results
of~\cite{morBauBBetal11,morGri18,morPet13},
while in \cref{sec:ptsia} we discuss a TSIA-like algorithm for parametric MOR
and its limitations.
In \cref{sec:opt_based},
we leverage our newly derived FONC from \cref{sec:fonc}
to propose a new optimization-based algorithm for computing locally-optimal ROMs
for $\HoL$-optimal parametric MOR\@.
Finally, we validate our new algorithm on some examples in
\cref{sec:numerical_experiments} and
make concluding remarks in \cref{sec:conclusion}.



\section{Preliminaries}\label{sec:preliminaries}
We begin with the prerequisite background.

\subsection{Non-parametric \texorpdfstring{$\Htwo$}{H2}-optimal MOR}%
Consider the non-parametric system \begin{equation}
  \label{eq:LTI}
  \begin{aligned}
    E \dot{x}(t) & = A x(t) + B u(t), \quad x(0) = 0, \\
    y(t) & = C x(t),
  \end{aligned}
\end{equation}
with the same dimensions
as~\eqref{eq:pLTI}.
We refer to~\eqref{eq:LTI} as a (non-parametric) FOM of order $n$,
and we assume that $E$ is invertible and~\eqref{eq:LTI} is asymptotically
stable.
Furthermore,
let
\begin{equation}
  \label{eq:LTIr}
  \begin{aligned}
    \hE \dot{\hx}(t) & = \hA \hx(t) + \hB u(t), \quad \hx(0) = 0, \\
    \hy(t) & = \hC \hx(t),
  \end{aligned}
\end{equation}
be a non-parametric analogue of~\eqref{eq:pLTIr}, with the same dimensions,
i.e., a ROM of order $r$.
We respectively use $\Sigma$ and $\hSigma$ to denote~\eqref{eq:LTI}
and~\eqref{eq:LTIr}.
For the state-space representation of the error system,
with transfer function $H - \hH$,
we use
\begin{equation}\label{eq:err_sys_non_para}
  \begin{aligned}
    \underbrace{%
      \begin{bmatrix}
        E & 0 \\
        0 & \hE
      \end{bmatrix}
    }_{\eqqcolon E_e}
    \begin{bmatrix}
      \dot{x}(t) \\
      \dot{\hx}(t)
    \end{bmatrix}
    & =
      \underbrace{%
        \begin{bmatrix}
          A & 0 \\
          0 & \hA
        \end{bmatrix}
      }_{\eqqcolon A_e}
      \begin{bmatrix}
        x(t) \\
        \hx(t)
      \end{bmatrix}
      +
      \underbrace{%
        \begin{bmatrix}
          B \\
          \hB
        \end{bmatrix}
      }_{\eqqcolon B_e}
      u(t), \\
    y(t) - \hy(t)
    & =
      \underbrace{%
        \begin{bmatrix}
          C & -\hC
        \end{bmatrix}
      }_{\eqqcolon C_e}
      \begin{bmatrix}
        x(t) \\
        \hx(t)
      \end{bmatrix},
  \end{aligned}
\end{equation}
where
$E_e, A_e \in \bbR^{(n + r) \times (n + r)}$,
$B_e \in \bbR^{(n + r) \times m}$, and
$C_e \in \bbR^{p \times (n + r)}$.

Following~\cite[Chapter~$5$]{morAnt05}, we introduce the Hardy $\Htwo$~norm.
Given a function $\fundef{G}{\bbC}{\Cpm}$ that is analytic in the open right
half-plane,
the $\Htwo$~norm of $G$ is
\begin{equation*}
  \normHtwo*{G}
  =
  \myparen*{\frac{1}{2\pi} \int_{-\infty}^{\infty}
  \normF*{G(\imag \omega)}^2
  \dif{\omega}}^{1/2},
\end{equation*}
where $\omega \in \bbR$ is the (angular) frequency and
$\normF{\cdot}$ denotes the Frobenius norm.
For a linear time-invariant system,
the $\Htwo$ norm is defined as the $\Htwo$ norm of its
transfer function.
Note that by our invertibility and stability assumptions,
the $\Htwo$ norm of $\Sigma$ is finite.

Even though the $\Htwo$ system norm is defined in the frequency domain,
the time-domain Lebesgue $\Linf$ norm of the output error is bounded
in terms of the $\Htwo$ norm of the error system~\eqref{eq:err_sys_non_para} and
the $\Ltwo$ norm of the input $u$ (see, e.g.,~\cite{morGugAB08}),
i.e.,
\begin{align}
  \label{eq:estimation}
  \normLinf*{y - \hy}
  \leq
  \normHtwo[\big]{H - \hH}
  \normLtwo*{u}.
\end{align}
Thus,
if we desire to find a ROM that minimizes the $\Linf$ norm of the output
error,~\eqref{eq:estimation} motivates finding the $\Htwo$-optimal ROM, i.e.,
the one that most reduces $\normHtwo{H - \hH}$.
Again, our assumptions imply that $\normHtwo{H - \hH}$ is finite.

In~\cite{morWil70},
the starting point for deriving the Wilson conditions is rewriting the squared
$\Htwo$ norm of $H - \hH$ as
\begin{equation}\label{eq:trace}
  \normHtwo[\big]{H - \hH}^2
  = \trace*{C_e P_e C_e\tran}
  = \trace*{B_e\tran Q_e B_e}.
\end{equation}
Here,
$P_e = P_e\tran$ and
$Q_e = Q_e\tran$, both in $\bbR^{(n + r) \times (n + r)}$, are determined by
the generalized Lyapunov equations
\begin{align*}
  0 & = A_e P_e E_e\tran + E_e P_e A_e\tran + B_e B_e\tran, \\
  0 & = A_e\tran Q_e E_e + E_e\tran Q_e A_e + C_e\tran C_e,
\end{align*}
and define the controllability Gramian \(P_{e}\) and the observability Gramian
\(E_e\tran Q_e E_e\) of the error system~\eqref{eq:err_sys_non_para}.
Based on~\eqref{eq:trace},
the FONC for $\Htwo$-optimality are
\begin{equation}\label{eq:Wilson}
  \begin{aligned}
    0 & = \hQ\tran \hA \hP + \tQ\tran A \tP, \\
    0 & = \hQ\tran \hE \hP + \tQ\tran E \tP, \\
    0 & = \hQ\tran \hB + \tQ\tran B, \\
    0 & = \hC \hP - C \tP,
  \end{aligned}
\end{equation}
where
$P = P\tran \in \Rnn$,
$\hP = \hP\tran \in \Rrr$, and
$\tP \in \Rnr$ are obtained from the block partitioning of the matrix
$P_e =
\begin{bsmallmatrix}
  P & \tP \\
  \tP\tran & \hP
\end{bsmallmatrix}$,
while
$Q = Q\tran \in \Rnn$,
$\hQ = \hQ\tran \in \Rrr$, and
$\tQ \in \Rnr$ are obtained from the corresponding partitioning in
$Q_e =
\begin{bsmallmatrix}
  Q & \tQ \\
  \tQ\tran & \hQ
\end{bsmallmatrix}$.

\begin{remark}\label{rem:Wilson_E_identity}
  Actually,
  in~\cite{morWil70},
  Wilson derived~\eqref{eq:Wilson} for the special case where $E = I_n$,
  the $n \times n$ identity.
  In~\cite{morVanGA08},
  it is also assumed that $E = I_n$,
  where~\eqref{eq:Wilson} is given in~\cite[Theorem~3.3]{morVanGA08}.
  As part of our derivation of new FONC for parametric systems,
  in \cref{sec:fonc} we explain how~\eqref{eq:Wilson} holds for arbitrary
  invertible matrices $E$.
\end{remark}

\subsection{Parametric \texorpdfstring{$\HoL$}{H2L2}-optimal MOR}
Now consider the parametric systems $\Sigmap$ and $\hSigmap$,
with their associated error system $H - \hH$:
\begin{equation}\label{eq:err_sys}
  \begin{aligned}
    \underbrace{%
      \begin{bmatrix}
        \Ep & 0 \\
        0 & \hEp
      \end{bmatrix}
    }_{\eqqcolon E_e(\p)}
    \begin{bmatrix}
      \dot{x}(t; \p) \\
      \dot{\hx}(t; \p)
    \end{bmatrix}
    & =
      \underbrace{%
        \begin{bmatrix}
          \Ap & 0 \\
          0 & \hAp
        \end{bmatrix}
      }_{\eqqcolon A_e(\p)}
      \begin{bmatrix}
        x(t; \p) \\
        \hx(t; \p)
      \end{bmatrix}
      +
      \underbrace{%
        \begin{bmatrix}
          \Bp \\
          \hBp
        \end{bmatrix}
      }_{\eqqcolon B_e(\p)}
      u(t), \\
    y(t; \p) - \hy(t; \p)
    & =
      \underbrace{%
        \begin{bmatrix}
          \Cp & -\hCp
        \end{bmatrix}
      }_{\eqqcolon C_e(\p)}
      \begin{bmatrix}
        x(t; \p) \\
        \hx(t; \p)
      \end{bmatrix},
  \end{aligned}
\end{equation}
where
$E_e(\p), A_e(\p) \in \bbR^{(n + r) \times (n + r)}$,
$B_e(\p) \in \bbR^{(n + r) \times m}$, and
$C_e(\p) \in \bbR^{p \times (n + r)}$.
For this parametric MOR setting,
we use the following assumptions.
\begin{assumption}[Assumptions on the FOM]\label{thm:assumptions}
  We assume:
  \begin{enumerate}[\emph{(\alph*)}]
  \item $\pset$ is a non-degenerate $d$-dimensional
  closed box aligned with the axes,\footnote{
  For simplicity and concreteness, we assume that $\pset$ is a closed box, but it actually suffices to assume
 that $\pset$ is a compact set of positive finite measure.}
  \item $\Ep$ is invertible for all $\p \in \pset$,
  \item the system $\Sigmap$ is asymptotically stable for all $\p \in \pset$,
  \item the functions $E, A, B, C$ are continuous over $\pset$.
  \end{enumerate}
\end{assumption}

Following~\cite{morBauBBetal11},
we now define parametric analogues of the $\Htwo$ and $\Linf$ norms.
For a function $\fundef{G}{\bbC \times \pset}{\Cpm}$,
such that
$G(\cdot; \p)$ is analytic in the open right half-plane
for all $\p \in \pset$,
the $\HoL$ norm of $G$ is
\begin{equation}
  \label{eq:HoL_norm}
  \normHoL{G} \coloneqq
  \myparen*{\intp \normHtwo{G(\cdot; \p)}^2 \dif{\p}}^{1/2}.
\end{equation}
Under \cref{thm:assumptions},
$\normHoL{H} < \infty$,
as $\normHtwo{H(\cdot; \p)}$ is bounded for all $\p \in \pset$ and
$\pset$ has finite measure.
The $\cL_{\infty} \otimes \Ltwo$ norm of a vector-valued function
$\fundef{v}{\bbR \times \pset}{\Rp}$~is
\begin{equation*}
  \norm{v}_{\Linf \otimes \Ltwo} \coloneqq
  \myparen*{\intp \norm{v(\cdot; \p)}_{\Linf}^2 \dif{\p}}^{1/2}.
\end{equation*}

Using~\eqref{eq:estimation} for $H(\cdot; \p)$ and $\hH(\cdot; \p)$,
it follows that the output error,
as measured by the $\Linf \otimes \Ltwo$ norm,
is bounded by the $\HoL$ norm of the error system~\eqref{eq:err_sys}, i.e.,
\begin{equation}
  \label{eq:estimation_pmor}
  \norm*{y - \hy}_{\Linf \otimes \Ltwo}
  \leq
  \normHoL[\big]{H - \hH}
  \normLtwo*{u}.
\end{equation}
If $\hEp$ is invertible and $\hSigmap$ is asymptotically stable for all
$\p \in \pset$,
then $\normHoL{H - \hH}$ is ensured to be finite.
The inequality in~\eqref{eq:estimation_pmor} immediately leads to the following
optimization problem for $\HoL$ parametric MOR
\begin{equation*}
  \min_{\hE, \hA, \hB, \hC} \quad \normHoL[\big]{H - \hH}.
\end{equation*}
Since optimizing over the space of all possible functions is clearly
impractical, in this paper
we use the following widely used parameter-separable forms for matrix-valued functions.
Specifically, we optimize over the following set of ROMs.
\begin{definition}\label{def:rom_set}
  Let $\fundef{\he_i, \ha_j, \hb_k, \hc_{\ell}}{\pset}{\bbR}$, for
  $i \in \{1, \hdots , q_{\hE}\} \eqqcolon [q_{\hE}]$,
  $j \in [q_{\hA}]$,
  $k \in [q_{\hB}]$, and
  $\ell \in [q_{\hC}]$,
  be given continuous functions.
  We define $\cR$ as the set of all ROMs whose matrix-valued functions have
  parameter-separable forms
  \begin{equation}\label{eq:rom_para_sep}
    \hEp = \sum_{i = 1}^{q_{\hE}} \he_i(\p) \hE_i,\ \
    \hAp = \sum_{j = 1}^{q_{\hA}} \ha_j(\p) \hA_j,\ \
    \hBp = \sum_{k = 1}^{q_{\hB}} \hb_k(\p) \hB_k,\ \
    \hCp = \sum_{\ell = 1}^{q_{\hC}} \hc_{\ell}(\p) \hC_{\ell},
  \end{equation}
  where
  $\hE_i, \hA_j \in \Rrr$,
  $\hB_k \in \Rrm$, and
  $\hC_{\ell} \in \Rpr$
  such that $\hEp$ is invertible and $\spab{\hAp, \hEp} < 0$ for all
  $\p \in \pset$.
\end{definition}
\begin{remark}\label{rem:remark_on_definition}
  Some comments on \cref{def:rom_set} are in order.
  First, note that \cref{def:rom_set} does not require any assumptions on the
  FOM and that $\cR$ is an open set.
  Second, we assume that the reduced-order matrix-valued
  functions~\eqref{eq:rom_para_sep} are cheap to evaluate for
  any~$\p \in \pset$.
  Typically, this means that the scalar functions
  $\he_i, \ha_j, \hb_k, \hc_{\ell}$ are both few in number and inexpensive.
  Third, while $\hEp$ being invertible is satisfied
  generically (since almost all square matrices are invertible),
  we cannot expect the additional stability assumption also to hold generically.
\end{remark}

Note that if a FOM has matrix-valued functions with forms analogous to~\eqref{eq:rom_para_sep}
and using the same scalar functions,
then a ROM $\hSigmap \in \cR$ can preserve this structure.
It can also be desirable to build ROMs with a different structure than the FOM;\
see, e.g.,~\cite{morWitTKetal16,morGosGU21}.
The method that we propose can either build ROMs that preserve the structure
of the FOM or change the given structure to a preferred one.
Even in the case that a parameter-separable form of the FOM is not given,
our approach can nevertheless design ROMs with the structure given by~\eqref{eq:rom_para_sep}.
Furthermore, we will not need any additional assumptions beyond those stated
in~\cref{thm:assumptions}.

We are now ready to present
the structured $\HoL$ optimization problem that we wish to solve
in order to obtain a parametric reduced-order model for $\Sigmap$:
\begin{equation}\label{eq:opt_problem_para}
  \min_{\hSigmap \in \cR} \quad \cJ\myparen[\big]{\hSigmap},
\end{equation}
where $\cR$ is from \cref{def:rom_set} and
$\cJ(\hSigmap) \coloneqq \normHoL{H - \hH}^2$.
We use the \emph{squared} $\HoL$ error,
as it is more convenient for deriving the respective gradients.

\begin{remark}
  Note that~\eqref{eq:opt_problem_para},
  due to the restriction of $\hSigmap \in \cR$,
  is actually a constrained optimization problem, i.e.,
  per the stability assumption in \cref{def:rom_set} defining $\cR$,
  a valid minimizer of~\eqref{eq:opt_problem_para} must also be an
  asymptotically stable system.
  In~\cref{sec:opt_based}, we explain how we satisfy this constraint
  algorithmically.
\end{remark}

Analogously to~\eqref{eq:trace} for the $\Htwo$ norm,
the $\HoL$ norm~\eqref{eq:HoL_norm} can be written as
\begin{equation}\label{eq:HoL_norm_gramian}
     \normHoL[\big]{H - \hH}^2
    = \intp \trace*{C_e(\p) P_e(\p) C_e(\p)\tran} \dif{\p}
    = \intp \trace*{B_e(\p)\tran Q_e(\p) B_e(\p)} \dif{\p},
\end{equation}
where for each $\p \in \pset$,
$P_e(\p) = P_e(\p)\tran$ and
$Q_e(\p) = Q_e(\p)\tran$, both in $\bbR^{(n + r) \times (n + r)}$,
respectively define the controllability ($P_e(\p)$) and observability ($E_e(\p)\tran Q_e(\p)
E_e(\p)$) Gramians of the error system~\eqref{eq:err_sys}.
Thus,
\begin{subequations}
  \begin{align}
    \label{eq:controllability_large}
    0
    & =
      A_e(\p) P_e(\p) E_e(\p)\tran
      + E_e(\p) P_e(\p) A_e(\p)\tran
      + B_e(\p) B_e(\p)\tran, \\
    \label{eq:observability_large}
    0
    & =
      A_e(\p)\tran Q_e(\p) E_e(\p)
      + E_e(\p)\tran Q_e(\p) A_e(\p)
      + C_e(\p)\tran C_e(\p),
  \end{align}
\end{subequations}
where
$P_e(\p) =
\begin{bsmallmatrix}
  \Pp & \tPp \\
  \tPp\tran & \hPp
\end{bsmallmatrix}$
has blocks
$\Pp =\Pp\tran \in \Rnn$,
$\hPp = \hPp\tran \in \Rrr$, and
$\tPp \in \Rnr$,
and
\begin{subequations}
  \begin{align}
    \label{eq:controllability_full}
    0 & = \Ap \Pp \Ep\tran + \Ep \Pp \Ap\tran + \Bp \Bp\tran, \\
    \label{eq:controllability_mix}
    0 & = \Ap \tPp \hEp\tran + \Ep \tPp \hAp\tran + \Bp \hBp\tran, \\
    \label{eq:controllability_red}
    0 & = \hAp \hPp \hEp\tran + \hEp \hPp \hAp\tran + \hBp \hBp\tran.
  \end{align}
\end{subequations}
Correspondingly,
$Q_e(\p) =
\begin{bsmallmatrix}
  \Qp & \tQp \\
  \tQp\tran & \hQp
\end{bsmallmatrix}$
has blocks
$\Qp =\Qp\tran \in \Rnn$,
$\hQp = \hQp\tran \in \Rrr$, and
$\tQp \in \Rnr$, and
\begin{subequations}\label{eq:observability_gramians}
  \begin{align}
    0 & = \Ap\tran \Qp \Ep + \Ep\tran \Qp \Ap + \Cp\tran \Cp, \\
    \label{eq:observability_mix}
    0 & = \Ap\tran \tQp \hEp + \Ep\tran \tQp \hAp - \Cp\tran \hCp, \\
    \label{eq:observability_red}
    0 & = \hAp\tran \hQp \hEp + \hEp\tran \hQp \hAp + \hCp\tran \hCp.
  \end{align}
\end{subequations}
Note that these blocks above, due to~\eqref{eq:HoL_norm_gramian}, will play a key role in our new FONC in \cref{sec:fonc},
and by~\cref{thm:assumptions}, these matrix-valued functions are continuous and bounded over $\pset$.

\subsection{Fr\'{e}chet differentiability}%
For our theoretical results in \cref{sec:fonc},
we need the following functional analysis definitions and results,
which follow~\cite[Chapter~4]{Zei86} and~\cite[Chapters~3,~6,~and~8]{Col12}.
Let $L(X, Y)$ denote the class of all bounded linear operators
$\fundef{A}{X}{Y}$ for normed vector spaces $X$ and $Y$.
Furthermore, let $f(x) = \lo*{\norm*{x}}$ denote that $f(x) / \norm*{x} \to 0$
as $x \to 0$.
\begin{definition}
  Let $X$ and $Y$ be normed vector spaces,
  $U \subseteq X$ open, and
  \mbox{$\fundef{f}{U}{Y}$} a function.
  The function $f$ is said to be \emph{Fr\'{e}chet differentiable} at $x \in U$
  if there exists an operator $Df(x) \in L(X, Y)$ such that
  \begin{align*}
    f(x + h) = f(x) + Df(x) h + \lo*{\norm*{h}},
  \end{align*}
  for all $h$ in some neighborhood of zero.
  The operator $Df(x)$ is called the \emph{Fr\'{e}chet derivative of $f$ at
    $x$}.
\end{definition}
\begin{definition}\label{def:derivative_inner_product}
  Let $X$ be a Hilbert space with inner product
  $\langle \cdot, \cdot \rangle_X$,
  $U \subseteq X$ open, and
  $\fundef{f}{U}{\bbR}$ a function.
  Further,
  let $f$ be Fr\'{e}chet differentiable at $x \in U$.
  The Riesz representative of $Df(x)$, i.e.,
  the element $a \in X$ such that
  \begin{align*}
    Df(x) h = \langle a, h \rangle_X,
  \end{align*}
  for all $h \in X$,
  is called the \emph{gradient of $f$ at $x$} and is denoted by $\nabla f(x)$.
\end{definition}
\begin{definition}
  Let $X$, $Y$, and $Z$ be normed vector spaces,
  $U \subset X \times Y$ open, and
  $\fundef{f}{U}{Z}$ a function.
  Let $y$ be fixed and set $g(x) = f(x, y)$.
  If $g$ is Fr\'{e}chet differentiable at $x$, then we define the
  \emph{partial Fr\'{e}chet derivative of $f$ at $(x, y)$ with respect to $x$}
  to be $D_x f(x, y) = Dg(x)$.
  The derivative $D_y f(x, y)$ and partial gradients are defined analogously.
\end{definition}



\section{First-order analysis of the squared \texorpdfstring{$\HoL$}{H2xL2}
  norm}%
\label{sec:fonc}
Our main theoretical result,
given in the following theorem,
establishes the gradient of the squared $\HoL$ norm of the error
system~\eqref{eq:err_sys} with respect to the reduced-order matrices given
by~\eqref{eq:rom_para_sep}.
This in turn directly establishes FONC for $\HoL$ optimality,
which we describe in \cref{thm:H2L2_opt} at the end of this section.
\begin{theorem}[Gradient]\label{thm:H2L2_gradients}
  Let \cref{thm:assumptions} hold.
  Furthermore,
  let $\hSigmap \in \cR$ be a structured, asymptotically stable $\HoL$-optimal
  ROM for $\Sigmap$~\eqref{eq:pLTI},
  recalling that $\hSigmap$ has matrix-valued functions in parameter-separable
  form as in~\eqref{eq:rom_para_sep}.
  Then the gradient of $\cJ(\hSigmap) = \normHoL{H - \hH}^2$ with respect to
  the fixed matrices defining~\eqref{eq:rom_para_sep} is given by
  \begin{align*}
    \nabla_{\hE_i} \cJ\myparen[\big]{\hSigmap}
    & = 2 \intp \he_i(\p)
      \myparen*{%
        \hQp\tran \hAp \hPp
        + \tQp\tran \Ap \tPp
      }
      \dif{\p},
    & i \in [q_{\hE}], \\
    \nabla_{\hA_j} \cJ\myparen[\big]{\hSigmap}
    & = 2 \intp \ha_j(\p)
      \myparen*{%
        \hQp\tran \hEp \hPp
        + \tQp\tran \Ep \tPp
      }
      \dif{\p},
    & j \in [q_{\hA}], \\
    \nabla_{\hB_k} \cJ\myparen[\big]{\hSigmap}
    & = 2 \intp \hb_k(\p)
      \myparen*{%
        \hQp\tran \hBp
        + \tQp\tran \Bp
      }
      \dif{\p},
    & k \in [q_{\hB}], \\
    \nabla_{\hC_{\ell}} \cJ\myparen[\big]{\hSigmap}
    & = 2 \intp \hc_{\ell}(\p)
      \myparen*{%
        \hCp \hPp
        - \Cp \tPp
      }
      \dif{\p},
    & \ell \in [q_{\hC}],
  \end{align*}
  with $\tP$, $\hP$ as in~\eqref{eq:controllability_mix}
  and~\eqref{eq:controllability_red} and
  $\tQ$, $\hQ$ as in~\eqref{eq:observability_mix}
  and~\eqref{eq:observability_red}.
\end{theorem}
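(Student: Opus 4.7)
The plan is to compute each gradient pointwise in $\p$ and then integrate, combining the trace representation of $\cJ$ with the linearity of the parameter-separable form \eqref{eq:rom_para_sep}. First, I would start from the identity $\cJ(\hSigmap) = \intp \trace*{C_e(\p) P_e(\p) C_e(\p)\tran} \dif\p$ from \eqref{eq:HoL_norm_gramian}, and justify differentiation under the integral sign. The argument requires that in a sufficiently small neighborhood of $\hSigmap \in \cR$ the error system remains asymptotically stable uniformly in $\p$; this follows from compactness of $\pset$, continuity of the spectral abscissa in the coefficient matrices, and the strict stability hypothesis $\spab{\hAp, \hEp} < 0$ across $\pset$. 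Under these conditions, $P_e(\p)$ and $Q_e(\p)$ depend continuously Fr\'echet-differentiably on the reduced-order matrices with bounds that are uniform in $\p$, which dominates the integrand and its directional derivatives.

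Next, the parameter-separable form reduces every derivative to a pointwise calculation by the chain rule: a perturbation $\hA_j \mapsto \hA_j + \Delta$ induces $\delta \hAp = \ha_j(\p)\Delta$, so the Fr\'echet derivative simply integrates $\ha_j(\p)$ against the pointwise derivative of $\trace*{C_e(\p) P_e(\p) C_e(\p)\tran}$ with respect to $\hAp$, and analogously for $\hEp$, $\hBp$, and $\hCp$.

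For the pointwise computation, I would adapt the classical Wilson adjoint trick to the generalized Lyapunov setting, i.e., the extension flagged in \cref{rem:Wilson_E_identity}. Differentiating \eqref{eq:controllability_large} with respect to any of $\hA, \hE, \hB$ produces a generalized Lyapunov equation for $\delta P_e$; pairing this against the observability equation \eqref{eq:observability_large}, which plays the role of an adjoint, and then using cyclicity of the trace eliminates $\delta P_e$ from $\delta \trace*{C_e P_e C_e\tran}$. The result is a bilinear expression in the structured perturbations and in $P_e, Q_e, E_e$. Because a perturbation of $\hA$ or $\hE$ is concentrated in the lower-right block of $A_e$ or $E_e$ and a perturbation of $\hB$ or $\hC$ in a single block row/column, reading off the contributions through the block structure of $E_e = \operatorname{diag}(E, \hE)$ and the partitions of $P_e$ and $Q_e$ produces exactly the combinations $\hQ\tran \hA \hP + \tQ\tran A \tP$, $\hQ\tran \hE \hP + \tQ\tran E \tP$, $\hQ\tran \hB + \tQ\tran B$, and $\hC \hP - C \tP$; symmetry of $\hP$ and $\hQ$ together with one transposition to pass from the Fr\'echet derivative to its Riesz representative in the Frobenius inner product then yield the stated formulas.

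The main obstacle will be the careful execution of the pointwise calculation with nontrivial $E$: the extra $E_e$ factors must be tracked through the perturbed Lyapunov equation so that the correct combinations of transposes and block positions emerge, details that are essentially invisible in the $E = I$ case of~\cite{morWil70}. A secondary but more routine technicality is the interchange of derivative and integral, which follows from a dominating-function argument over the compact parameter set once local-uniform stability on a neighborhood of $\hSigmap$ is established.
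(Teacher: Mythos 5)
Your proposal is correct and follows essentially the same route as the paper: a pointwise Wilson-type gradient computation for general invertible $E$ and $\hE$ (the paper's \cref{thm:grad_non_para}, proved via the perturbed Sylvester/Lyapunov equations and the trace-pairing device of \cref{thm:trace_equal}, which is exactly your ``adjoint'' step), followed by the chain rule through the scalar coefficients $\he_i, \ha_j, \hb_k, \hc_\ell$ (the paper's \cref{thm:grad_mu}) and the Leibniz rule to interchange gradient and integral. The only cosmetic difference is that you perturb the controllability Gramian in $\trace\myparen{C_e P_e C_e\tran}$ and pair against the observability equation, whereas the paper perturbs the observability blocks in $\trace\myparen{B_e\tran Q_e B_e}$ and pairs against the controllability equations; these are mirror images of the same argument, and your dominated-convergence justification of the interchange is, if anything, more explicit than the paper's.
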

Per \cref{rem:Wilson_E_identity},
recall that Wilson derived the analogous FONC for $\Htwo$-optimal MOR when
$E = I_n$ and $\hE = I_r$.
To prove \cref{thm:H2L2_gradients},
we first need several intermediate results.
To this end,
we now respectively generalize~\cite[Lemma~3.2 and Theorem~3.3]{morVanGA08} to
the case of general invertible $E$ and $\hE$ matrices.
\begin{lemma}\label{thm:trace_equal}
  Let
  $A, E \in \Rnn$,
  $\hA, \hE \in \Rrr$, and
  $B, C \in \Rnr$.
  If $M, N \in \Rnr$ solve the Sylvester equations
  \begin{equation}
    \label{eq:trace_equal}
    0 = A M \hE\tran + E M \hA\tran + B, \quad
    0 = A\tran N \hE + E\tran N \hA + C,
  \end{equation}
  then $\trace*{B\tran N} = \trace*{C\tran M}$.
\end{lemma}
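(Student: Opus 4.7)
The plan is to use the two Sylvester equations to substitute expressions for $B$ and $C$ into the two traces, and then exploit the cyclic property of the trace (together with $\trace{X} = \trace{X\tran}$) to convert one into the other. Since the statement is a purely algebraic identity with no analytic or asymptotic content, no approximation or limit argument is needed.

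Concretely, I would first solve the left Sylvester equation in~\eqref{eq:trace_equal} for $B$, giving $B = -A M \hE\tran - E M \hA\tran$. Substituting into $\trace{B\tran N}$ and transposing the bracket yields
\begin{equation*}
  \trace*{B\tran N}
  = -\trace*{\hE M\tran A\tran N} - \trace*{\hA M\tran E\tran N}.
\end{equation*}
Applying the cyclic property of the trace (moving $\hE$ and $\hA$ to the right) produces
\begin{equation*}
  \trace*{B\tran N}
  = -\trace*{M\tran A\tran N \hE} - \trace*{M\tran E\tran N \hA}
  = -\trace*{M\tran \myparen*{A\tran N \hE + E\tran N \hA}}.
\end{equation*}

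Next, I would use the right Sylvester equation in~\eqref{eq:trace_equal}, which rearranges to $A\tran N \hE + E\tran N \hA = -C$. Inserting this into the last display gives
\begin{equation*}
  \trace*{B\tran N} = \trace*{M\tran C} = \trace*{C\tran M},
\end{equation*}
which is precisely the claimed identity.

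There is no substantive obstacle: the whole argument is a short trace manipulation, and the only care required is in tracking the transposes when moving factors under the trace. No invertibility of $E$ or $\hE$, and no stability, is used for this lemma — that role is played only in ensuring the existence and uniqueness of $M$ and $N$ when this lemma is applied later in the proof of~\cref{thm:H2L2_gradients}.
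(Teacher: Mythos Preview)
Your proof is correct and follows essentially the same route as the paper's own argument: substitute $B$ from the first Sylvester equation into $\trace{B\tran N}$, then use the cyclic property of the trace to regroup into $-\trace{(A\tran N \hE + E\tran N \hA) M\tran}$, and finish using the second Sylvester equation. Your closing remark that no invertibility or stability assumption is needed for this lemma is also accurate.
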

\begin{proof}
  By substituting in $B$ from~\eqref{eq:trace_equal} into $\trace{B\tran N}$ and
  using the linearity and cyclic permutation properties of the trace,
  we have that
  \begin{align*}
    \trace*{B\tran N}
    & = \trace*{-\myparen*{A M \hE\tran + E M \hA\tran}\tran N}
      = \trace*{-\myparen*{\hE M\tran A\tran N + \hA M\tran E\tran N}} \\
    & = \trace*{-\myparen*{A\tran N \hE + E\tran N \hA} M\tran}
      = \trace*{C\tran M}.
  \end{align*}
\end{proof}
\begin{theorem}\label{thm:grad_non_para}
  Let $\Sigma$ and $\hSigma$,
  as in~\eqref{eq:LTI} and~\eqref{eq:LTIr},
  respectively with transfer functions $H$ and $\hH$,
  have invertible $E$ and $\hE$ matrices and
  be asymptotically stable non-parametric systems.
  Then,
  for $\tcJ(\hSigma) \coloneqq \normHtwo{H - \hH}^2$,
  the following hold
  \begin{subequations}
    \begin{align}
      \label{eq:Wilson_grad_E}
      \nabla_{\hE} \tcJ\myparen[\big]{\hSigma}
      & =
        2 \myparen*{\hQ\tran \hA \hP + \tQ\tran A \tP}, \\
      \nabla_{\hA} \tcJ\myparen[\big]{\hSigma}
      & =
        2 \myparen*{\hQ\tran \hE \hP + \tQ\tran E \tP}, \\
      \nabla_{\hB} \tcJ\myparen[\big]{\hSigma}
      & =
        2 \myparen*{\hQ\tran \hB + \tQ\tran B}, \\
      \nabla_{\hC} \tcJ\myparen[\big]{\hSigma}
      & =
        2 \myparen*{\hC \hP - C \tP}.
    \end{align}
  \end{subequations}
\end{theorem}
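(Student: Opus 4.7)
The approach is to begin from the two equivalent Gramian-trace forms of $\tcJ(\hSigma) = \normHtwo{H - \hH}^2$ recalled in~\eqref{eq:trace} and to differentiate whichever form is most convenient for each reduced matrix. The crucial observation is that the blocks $\tP, \hP$ of $P_e$ depend on $\hA, \hB, \hE$ but not on $\hC$, while the blocks $\tQ, \hQ$ of $Q_e$ depend on $\hA, \hC, \hE$ but not on $\hB$. For $\nabla_{\hC}\tcJ$ the $P$-form is therefore the natural choice: expanding
\begin{equation*}
  \trace*{C_e P_e C_e\tran}
  = \trace*{C P C\tran}
  - 2\trace*{C \tP \hC\tran}
  + \trace*{\hC \hP \hC\tran}
\end{equation*}
and treating $P, \tP, \hP$ as constants gives $\nabla_{\hC}\tcJ = 2(\hC\hP - C\tP)$ immediately. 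Symmetrically, expanding $\trace*{B_e\tran Q_e B_e}$ in block form and treating $Q, \tQ, \hQ$ as constants yields $\nabla_{\hB}\tcJ = 2(\hQ\tran\hB + \tQ\tran B)$.

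The substantive cases are $\nabla_{\hA}\tcJ$ and $\nabla_{\hE}\tcJ$, since $\hA$ and $\hE$ enter both $P_e$ and $Q_e$ implicitly through the generalized Lyapunov equations~\eqref{eq:controllability_large}--\eqref{eq:observability_large}. The plan is to sidestep the implicit sensitivities by using the Lagrangian
\begin{equation*}
  L(\hSigma, P_e, Q_e)
  = \trace*{C_e P_e C_e\tran}
  + \trace*{Q_e \myparen*{A_e P_e E_e\tran + E_e P_e A_e\tran + B_e B_e\tran}},
\end{equation*}
in which $Q_e$ plays the role of a symmetric multiplier. The second summand vanishes whenever $P_e$ solves~\eqref{eq:controllability_large}, so $L$ agrees with $\tcJ$ on the feasible set; moreover, $\partial L / \partial P_e = 0$ is exactly the dual Lyapunov equation~\eqref{eq:observability_large}, so fixing $Q_e$ to satisfy it kills all sensitivities through $P_e$. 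The total differential $d\tcJ$ then reduces to the explicit differentials in $dA_e, dB_e, dC_e, dE_e$. Substituting block-diagonal perturbations with nonzero bottom-right entries $d\hA$ and $d\hE$, using the symmetry of $P_e, Q_e$ together with cyclic invariance of the trace to collapse
\begin{equation*}
  \trace*{Q_e (dA_e) P_e E_e\tran}
  + \trace*{Q_e E_e P_e (dA_e)\tran}
  = 2\trace*{(dA_e) P_e E_e\tran Q_e},
\end{equation*}
and finally reading off the bottom-right block of $P_e E_e\tran Q_e$ (respectively $P_e A_e\tran Q_e$) yields $\nabla_{\hA}\tcJ = 2(\hQ\tran \hE \hP + \tQ\tran E \tP)$ and, analogously, $\nabla_{\hE}\tcJ = 2(\hQ\tran \hA \hP + \tQ\tran A \tP)$.

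The main obstacle I anticipate is precisely the block-matrix trace bookkeeping for the $\hA$ and $\hE$ gradients, which is where the generalization beyond $E = I_n$ actually lives: the generalized forms of the Lyapunov and Sylvester equations must be carried through every step, so the simplifications available in~\cite{morWil70,morVanGA08} are no longer at hand. \cref{thm:trace_equal} does not enter the Lagrangian route directly, but it underlies the equivalence of the two Gramian-trace formulations in~\eqref{eq:trace} used as the starting point; an alternative derivation would instead perturb the Sylvester equations~\eqref{eq:controllability_mix}--\eqref{eq:controllability_red} defining $\tP, \hP$ and invoke~\cref{thm:trace_equal} to recast trace expressions involving $\delta\tP, \delta\hP$ in terms of $\tQ, \hQ$, arriving at the same four gradients.
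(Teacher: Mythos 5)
Your proposal is correct, and it reaches the four gradients by a genuinely different route from the paper for the two nontrivial cases. The paper proves only the $\hE$ case explicitly (asserting the rest are analogous): it perturbs $\hE \mapsto \hE + \Delta_1$, writes the induced first-order perturbations $\Delta_2, \Delta_3$ of $\tQ, \hQ$ via the perturbed Sylvester and Lyapunov equations, and then invokes \cref{thm:trace_equal} twice — pairing the perturbed equations against \eqref{eq:controllability_mix} and \eqref{eq:controllability_red} — to convert the traces involving $\Delta_2, \Delta_3$ into traces involving $\tP, \hP$, from which the Frobenius inner product with $\Delta_1$ is read off. Your Lagrangian argument is the adjoint-state formulation of exactly that computation: choosing the multiplier $Q_e$ to satisfy \eqref{eq:observability_large} annihilates the sensitivity of $\trace\myparen{C_e P_e C_e\tran}$ through $P_e$, which is precisely the duality that \cref{thm:trace_equal} encodes pairwise at the block level. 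What your route buys is a single systematic mechanism covering $\hA$ and $\hE$ at once without tracking $\lo{\normF{\Delta_1}}$ remainders by hand, plus the clean observation that $\nabla_{\hB}$ and $\nabla_{\hC}$ are immediate from the appropriate choice between the two trace forms in \eqref{eq:trace} because $Q_e$ does not depend on $\hB$ and $P_e$ does not depend on $\hC$ — a point the paper leaves implicit under ``similar arguments.'' What the paper's route buys is self-containedness: it never needs to assert differentiability of $P_e$ as a function of the reduced matrices (which your Lagrangian step tacitly uses and which follows from the implicit function theorem applied to the nonsingular Lyapunov operator under the stability assumptions — worth one sentence if you write this up), and its block bookkeeping is localized to the $n \times r$ and $r \times r$ blocks rather than the full $(n+r) \times (n+r)$ products $P_e E_e\tran Q_e$ and $P_e A_e\tran Q_e$. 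The final block extractions in your argument, e.g.\ $\myparen{P_e E_e\tran Q_e}_{22}\tran = \hQ\tran \hE \hP + \tQ\tran E \tP$, check out against the stated formulas.
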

\begin{proof}
  We only derive the gradient with respect to $\hE$,
  as the other gradients follow with similar arguments.
  Given non-parametric systems~\eqref{eq:LTI} and~\eqref{eq:LTIr},
  for the duration of this proof
  we redefine~\eqref{eq:observability_gramians} as its non-parametric analogue,
  i.e., we remove the dependency on $\p$ from its equations.
  Then, via~\eqref{eq:trace},
  we have the following formulation of the squared $\Htwo$ error
  \begin{equation*}
    \tcJ\myparen[\big]{\hSigma} =
    \trace*{B\tran Q B + 2 B\tran \tQ \hB + \hB\tran \hQ \hB},
  \end{equation*}
  where matrices $Q$, $\tQ$, and $\hQ$ are from
  (again, the non-parametric analogues of)~\eqref{eq:observability_gramians}.
  Analogous to the proof of~\cite[Theorem~3.3]{morVanGA08},
  a perturbation in $\hE$ leads to perturbations in $\tQ$ and $\hQ$.
  Thus,
  by respectively replacing $\hE$ and $\tQ$
  in the Sylvester equation~\eqref{eq:observability_mix}
  with $\hE + \Delta_1$ and $\tQ + \Delta_2$,
  and then using equality~\eqref{eq:observability_mix},
  it follows that
  \begin{align}
    \notag
    0
    & =
      A\tran \myparen*{\tQ + \Delta_2} \myparen*{\hE + \Delta_1}
      + E\tran \myparen*{\tQ + \Delta_2} \hA
      - C\tran \hC \\
    \notag
    & =
      \myparen*{
      A\tran \tQ \hE
      + E\tran \tQ \hA
      - C\tran \hC}
      + A\tran \tQ \Delta_1
      + A\tran \Delta_2 \hE
      + E\tran \Delta_2 \hA
      + A\tran \Delta_2 \Delta_1 \\
    \label{eq:perturbed_sylv}
    & =
      A\tran \tQ \Delta_1
      + A\tran \Delta_2 \hE
      + E\tran \Delta_2 \hA
      + \lo*{\normF*{\Delta_1}}.
  \end{align}
  Correspondingly,
  respectively replacing $\hE$ and $\hQ$
  in the Lyapunov equation~\eqref{eq:observability_red}
  with $\hE + \Delta_1$ and $\hQ + \Delta_3$
  and then using equality~\eqref{eq:observability_red},
  we have that
  \begin{align}
    \notag
    0 = {}
    &
      \hA\tran \myparen*{\hQ + \Delta_3} \myparen*{\hE + \Delta_1}
      + \myparen*{\hE +  \Delta_1}\tran \myparen*{\hQ + \Delta_3} \hA
      + \hC\tran \hC \\
    \notag
    = {}
    &
      \myparen*{\hA\tran \hQ \hE
      + \hE \tran \hQ \hA
      + \hC\tran \hC}
      + \hA\tran \hQ \Delta_1
      + \hA\tran \Delta_3 \hE
      + \hE\tran \Delta_3 \hA
      + \Delta_1\tran \hQ \hA \\
    \notag
    &
      \quad + \hA\tran \Delta_3\Delta_1
      + \Delta_1\tran \Delta_3 \hA  \\
    \label{eq:perturbed_lyap}
    = {}
    &
      \hA\tran \hQ \Delta_1
      + \Delta_1\tran \hQ \hA
      + \hA\tran \Delta_3 \hE
      + \hE\tran \Delta_3 \hA
      + \lo*{\normF*{\Delta_1}}.
  \end{align}
  Now consider the perturbed system
  $\hSigma_{\Delta} \coloneqq (\hE + \Delta_1, \hA, \hB, \hC)$.
  This perturbation to $\hE$ results in the following perturbations
  $\hE + \Delta_1$, $\tQ + \Delta_2$, and $\hQ + \Delta_3$ in $\tcJ$,
  and so
  \begin{align}
    \notag
    \tcJ\myparen[\big]{\hSigma_{\Delta}}
    & =
      \trace*{
        B\tran Q B
        + 2 B\tran \myparen*{\tQ + \Delta_2} \hB
        + \hB\tran \myparen*{\hQ + \Delta_3} \hB
      } \\
    \label{eq:perturbation_E}
    & =
      \tcJ\myparen[\big]{\hSigma}
      + \trace*{
        2 B\tran \Delta_2 \hB
        + \hB\tran \Delta_3 \hB
      }.
  \end{align}
  In order to obtain~\eqref{eq:Wilson_grad_E},
  we first rewrite the second summand of~\eqref{eq:perturbation_E} as an inner
  product as follows.
  By applying \cref{thm:trace_equal} to
  Sylvester equations~\eqref{eq:controllability_mix} (solving for $\tP$)
  and~\eqref{eq:perturbed_sylv} (solving for $\Delta_2$)
  and then using properties of the trace,
  we have
  \begin{align}
    \label{eq:trace_equal_tE}
    2 \trace*{\hB B\tran \Delta_2}
    =
    2 \trace*{\Delta_1\tran \tQ\tran A \tP}
    + \lo*{\normF*{\Delta_1}}.
  \end{align}
  By applying the same procedure to
  Lyapunov equations~\eqref{eq:controllability_red}
  and~\eqref{eq:perturbed_lyap},
  we obtain
  \begin{align}
    \label{eq:trace_equal_hE}
    \trace*{\hB \hB\tran \Delta_3}
    =
    2 \trace*{\Delta_1\tran \hQ\tran \hA \hP}
    + \lo*{\normF*{\Delta_1}}.
  \end{align}
  Plugging~\eqref{eq:trace_equal_tE} and~\eqref{eq:trace_equal_hE}
  into~\eqref{eq:perturbation_E} yields
  \begin{align*}
    \trace*{2 B\tran \Delta_2 \hB + \hB\tran \Delta_3 \hB}
    & =
      2 \trace*{\Delta_1\tran \tQ\tran A \tP}
      + 2 \trace*{\Delta_1\tran \hQ\tran \hA \hP}
      + \lo*{\normF*{\Delta_1}} \\
    & =
      \trace*{
        2 \myparen*{\tP\tran A\tran \tQ
        + \hP\tran \hA\tran \hQ} \Delta_1
      }
      + \lo*{\normF*{\Delta_1}} \\
    & =
      \ipF*{2 \myparen*{\tQ\tran A \tP + \hQ\tran \hA \hP}}{\Delta_1}
      + \lo*{\normF*{\Delta_1}}.
  \end{align*}
  The gradient~\eqref{eq:Wilson_grad_E} follows using
  \cref{def:derivative_inner_product}.
\end{proof}
The following corollary (of \cref{thm:grad_non_para}) will be useful in proving
\cref{thm:H2L2_gradients}.
\begin{corollary}\label{thm:grad_mu}
  Let $\Sigma$ and $\hSigma$,
  as in~\eqref{eq:LTI} and~\eqref{eq:LTIr},
  respectively with transfer functions $H$ and $\hH$,
  have invertible $E$ and $\hE$ matrices and
  be asymptotically stable non-parametric systems.
  Furthermore, let the reduced-order matrices be decomposed as
  $\hE = \he_1 \hE_1 + \hE_2$,
  $\hA = \ha_1 \hA_1 + \hA_2$,
  $\hB = \hb_1 \hB_1 +\hB_2$, and
  $\hC = \hc_1 \hC_1 + \hC_2$,
  where $\he_1, \ha_1, \hb_1, \hc_1 \in \bbR$.
  Then, for $\tcJ(\hSigma) \coloneqq \normHtwo{H - \hH}^2$,
  \begin{align*}
    \nabla_{\hE_1} \tcJ\myparen[\big]{\hSigma}
    & =
      2 \he_1 \myparen*{\hQ\tran \hA \hP + \tQ\tran A \tP}, \\
    \nabla_{\hA_1} \tcJ\myparen[\big]{\hSigma}
    & =
      2 \ha_1 \myparen*{\hQ\tran \hE \hP + \tQ\tran E\tP}, \\
    \nabla_{\hB_1} \tcJ\myparen[\big]{\hSigma}
    & =
      2 \hb_1 \myparen*{\hQ\tran \hB + \tQ \tran B}, \\
    \nabla_{\hC_1} \tcJ\myparen[\big]{\hSigma}
    & =
      2 \hc_1 \myparen*{\hC \hP - C \tP},
  \end{align*}
  using the convention that $\tcJ$ is a function of
  $\hE_1$, $\hA_1$, $\hB_1$, and $\hC_1$.
\end{corollary}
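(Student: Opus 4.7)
The plan is to obtain the four gradients from \cref{thm:grad_non_para} by composing the squared $\Htwo$ error functional with the four affine maps that define the decompositions. Since the four arguments $\hE_1,\hA_1,\hB_1,\hC_1$ enter $\tcJ$ independently (each appears in a separate matrix of $\hSigma$), it suffices to treat each gradient separately; I will spell out $\nabla_{\hE_1}\tcJ$ in detail, and the other three follow by the same argument applied to the corresponding gradient in \cref{thm:grad_non_para}.

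The key observation is that the map $\fundef{\varphi_E}{\Rrr}{\Rrr}$, $\varphi_E(\hE_1) \coloneqq \he_1 \hE_1 + \hE_2$, is affine, so it is Fr\'{e}chet differentiable with derivative $D\varphi_E(\hE_1)[\Delta] = \he_1 \Delta$ for every perturbation $\Delta \in \Rrr$. Writing $\tcJ$, as a function of $\hE_1$, as the composition $\tcJ(\hSigma) = \tcJ \circ \varphi_E$ (with the remaining matrices held fixed), the chain rule for Fr\'{e}chet derivatives yields
\begin{equation*}
  D_{\hE_1} \tcJ\myparen[\big]{\hSigma} [\Delta]
  = D_{\hE} \tcJ\myparen[\big]{\hSigma} \myparen*{\he_1 \Delta}
  = \he_1 \, D_{\hE} \tcJ\myparen[\big]{\hSigma} [\Delta],
\end{equation*}
where the last equality uses linearity of $D_{\hE} \tcJ(\hSigma)$. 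Translating to the Riesz representation under the Frobenius inner product via \cref{def:derivative_inner_product},
\begin{equation*}
  \ipF*{\nabla_{\hE_1} \tcJ\myparen[\big]{\hSigma}}{\Delta}
  = \he_1 \ipF*{\nabla_{\hE} \tcJ\myparen[\big]{\hSigma}}{\Delta}
  = \ipF*{\he_1 \nabla_{\hE} \tcJ\myparen[\big]{\hSigma}}{\Delta}
\end{equation*}
for all $\Delta \in \Rrr$, and so $\nabla_{\hE_1}\tcJ(\hSigma) = \he_1 \nabla_{\hE}\tcJ(\hSigma)$. Substituting the formula for $\nabla_{\hE}\tcJ(\hSigma)$ from \cref{thm:grad_non_para} gives the stated expression $2\he_1(\hQ\tran \hA \hP + \tQ\tran A \tP)$.

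Applying the identical argument to the affine maps
$\varphi_A(\hA_1) = \ha_1 \hA_1 + \hA_2$,
$\varphi_B(\hB_1) = \hb_1 \hB_1 + \hB_2$, and
$\varphi_C(\hC_1) = \hc_1 \hC_1 + \hC_2$, and combining with the corresponding gradients from \cref{thm:grad_non_para}, yields the remaining three expressions. There is no real obstacle: since the four arguments appear in disjoint system matrices, no cross terms arise, and asymptotic stability/invertibility of $\hE$ is preserved locally around the nominal point so that the Gramians $\tQ,\hQ,\tP,\hP$ remain well-defined when taking the derivative. The only point requiring any care is verifying that the affine chain rule carries the Fr\'{e}chet derivative through cleanly, which it does because $D\varphi_\bullet(\cdot)$ is the constant operator $\Delta \mapsto \text{(scalar)}\cdot \Delta$.
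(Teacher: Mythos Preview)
Your proof is correct and matches the paper's approach: the paper states this result as a corollary of \cref{thm:grad_non_para} without giving an explicit proof, precisely because it follows immediately from the chain rule applied to the affine maps $\hE_1 \mapsto \he_1\hE_1 + \hE_2$, etc., exactly as you describe. There is nothing to add.
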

As we now show,
the proof of \cref{thm:H2L2_gradients} follows from our results above.
\begin{proof}[Proof of \cref{thm:H2L2_gradients}]
  We only give the proof for the gradient with respect to $\hE_i$,
  as the others follow similarly.
  We note that $\hEp$ can be written as
  \begin{align*}
    \hEp
    & =
      \he_i(\p) \hE_i
      + \sum_{\substack{j = 1 \\ j \neq i}}^{q_{\hE}} \he_j(\p) \hE_j
     =
      \he_i(\p) \hE_i
      + \tilde{E}(\p),
  \end{align*}
  where $\tilde{E}(\p) \in \Rrr$ does not depend on $\hE_i$,
  and so we have the same structure of the matrix $\hEp$ as in
  \cref{thm:grad_mu}.
  Respectively using the Leibniz rule and \cref{thm:grad_mu} to obtain the
  second and third equalities below, we conclude the proof via
  \begin{align*}
    \nabla_{\hE_i} \cJ\myparen[\big]{\hSigmap}
    & =
      \nabla_{\hE_i}
      \intp
      \normHtwo*{H(\cdot; \p) - \hH(\cdot; \p)}^2
      \dif{\p}
    =
      \intp
      \nabla_{\hE_i}
      \normHtwo*{H(\cdot; \p) - \hH(\cdot; \p)}^2
      \dif{\p} \\
    & =
      2 \intp
      \he_i(\p)
      \myparen*{
        \hQp\tran \hAp \hPp
        + \tQp\tran \Ap \tPp
      }
      \dif{\p}.
  \end{align*}
\end{proof}
The $\HoL$ FONC,
which we first described in~\cite{morHunMS18},
is a direct consequence of \cref{thm:H2L2_gradients}.
Hence,
we restate the $\HoL$ FONC here as the following corollary.
\begin{corollary}[FONC]\label{thm:H2L2_opt}
  Let \cref{thm:assumptions} hold.
  Furthermore,
  let $\hSigmap \in \cR$ be a structured, asymptotically stable $\HoL$-optimal
  ROM for $\Sigmap$~\eqref{eq:pLTI},
  recalling that $\hSigmap$ has matrix-valued functions in parameter-separable
  form as in~\eqref{eq:rom_para_sep}.
  Then
  \begin{subequations}\label{eq:Wilson_type}
    \begin{alignat}{3}
      \label{eq:Wilson_type_E}
      0
      & =
        \intp
        \he_i(\p)
        \myparen*{
          \hQp\tran \hAp \hPp
          + \tQp\tran \Ap \tPp
        }
        \dif{\p}, \qquad
      & i \in [q_{\hE}], \\
      \label{eq:Wilson_type_A}
      0
      & =
        \intp
        \ha_j(\p)
        \myparen*{
          \hQp\tran \hEp \hPp
          + \tQp\tran \Ep \tPp
        }
        \dif{\p},
      & j \in [q_{\hA}], \\
      \label{eq:Wilson_type_B}
      0
      & =
        \intp
        \hb_k(\p)
        \myparen*{
          \hQp\tran \hBp
          + \tQp\tran \Bp
        }
        \dif{\p},
      & k \in [q_{\hB}], \\
      \label{eq:Wilson_type_C}
      0
      & =
        \intp
        \hc_{\ell}(\p)
        \myparen*{
          \hCp \hPp
          - \Cp \tPp
        }
        \dif{\p},
      & \ell \in [q_{\hC}],
    \end{alignat}
  \end{subequations}
  with $\tP, \hP$ as in~\eqref{eq:controllability_mix}
  and~\eqref{eq:controllability_red} and $\tQ, \hQ$ as
  in~\eqref{eq:observability_mix} and~\eqref{eq:observability_red}.
\end{corollary}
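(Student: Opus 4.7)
The plan is to deduce \cref{thm:H2L2_opt} as an immediate consequence of \cref{thm:H2L2_gradients} together with the standard first-order optimality principle on open sets. Recall from \cref{rem:remark_on_definition} that $\cR$ is an open subset of the finite-dimensional product space of matrix tuples $(\hE_1,\dots,\hE_{q_{\hE}},\hA_1,\dots,\hA_{q_{\hA}},\hB_1,\dots,\hB_{q_{\hB}},\hC_1,\dots,\hC_{q_{\hC}})$, since both invertibility of $\hEp$ and the strict inequality $\spab{\hAp,\hEp}<0$ are open conditions (continuous in $\p$ over the compact set $\pset$, hence the maximum over $\p$ depends continuously on the defining matrices). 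Consequently, a locally $\HoL$-optimal $\hSigmap \in \cR$ is an interior local minimizer of $\cJ$ when viewed as a function on this product space.

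Next, I would invoke \cref{thm:H2L2_gradients} to conclude that $\cJ$ is Fréchet differentiable at $\hSigmap$, with the partial gradients given by the four displayed integral expressions. Because the minimizer is interior, the usual first-order necessary condition applies: each partial Fréchet derivative must be the zero operator, or equivalently, each partial gradient (its Riesz representative in the Frobenius inner product, per \cref{def:derivative_inner_product}) must vanish. This immediately yields
\begin{equation*}
  \nabla_{\hE_i}\cJ(\hSigmap)=0,\quad
  \nabla_{\hA_j}\cJ(\hSigmap)=0,\quad
  \nabla_{\hB_k}\cJ(\hSigmap)=0,\quad
  \nabla_{\hC_\ell}\cJ(\hSigmap)=0,
\end{equation*}
for all admissible indices.

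Substituting the explicit formulas from \cref{thm:H2L2_gradients} and dividing out the common factor of $2$ then produces exactly the equations \eqref{eq:Wilson_type_E}--\eqref{eq:Wilson_type_C}, with $\tP,\hP,\tQ,\hQ$ as defined by \eqref{eq:controllability_mix}, \eqref{eq:controllability_red}, \eqref{eq:observability_mix}, and \eqref{eq:observability_red}. There is really no additional computation needed here; the only subtlety worth emphasizing is the openness of $\cR$, which is what legitimizes the unconstrained FONC argument despite the constraints built into \cref{def:rom_set}. If openness were in doubt, one would have to resort to a tangent-cone argument, but the continuous dependence of $\hEp$ and $\spab{\hAp,\hEp}$ on the defining matrices, combined with compactness of $\pset$, makes this a routine verification rather than a genuine obstacle.
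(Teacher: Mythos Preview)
Your proposal is correct and matches the paper's approach exactly: the paper simply states that the corollary ``is a direct consequence of \cref{thm:H2L2_gradients}'' without further argument. Your write-up is more explicit about why the openness of $\cR$ justifies the unconstrained first-order condition, but the underlying reasoning is the same.
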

Due to the similarity of the equations in~\eqref{eq:Wilson_type} with the Wilson
conditions~\eqref{eq:Wilson} for non-parametric systems,
in~\cite{morHunMS18} (and later in~\cite[Theorem~6.11]{morMli20})
we referred to~\eqref{eq:Wilson_type} as
\emph{Wilson-type optimality conditions},
but for conciseness, we use FONC in this paper.



\section{Comparison to related work}\label{sec:related_work}
We now compare our results in \cref{thm:H2L2_opt} with earlier results of
Baur et al.~\cite{morBauBBetal11},
Grimm~\cite{morGri18}, and
Petersson~\cite{morPet13}.

\subsection{Parametric \texorpdfstring{$B, C$}{B, C} optimality results of Baur
  et al}%
A special case of $\HoL$-optimal parametric MOR is considered
in~\cite[Section~5.1]{morBauBBetal11},
where the FOM
\begin{align}
  \label{eq:pLTI_Baur}
  \begin{split}
    E \dot{x}(t; \p) & = A x(t; \p) + \Bp u(t), \\
    y(t; \p) & = \Cp x(t; \p),
  \end{split}
\end{align}
is a single-input, single-output system,
$E, A \in \Rnn$ are constant matrices, and
for $\p = \myparen{\p_1, \p_2}$ and $\p_1, \p_2 \in [0, 1]$,
we have the following parameter-separable forms:
\begin{equation}\label{eq:parameter_separable_FOM}
  \Bp = B_1 + \p_1 B_2 \in \bbR^{n \times 1}
  \quad \text{and} \quad
  \Cp = C_1 + \p_2 C_2 \in \bbR^{1 \times n}.
\end{equation}
The ROM is assumed to have the same structure as the FOM, i.e.,
\begin{align}
  \label{eq:pLTIr_Baur}
  \begin{split}
    \hE \dot{\hx}(t; \p) & = \hA \hx(t; \p) + \hBp u(t), \\
    \hy(t; \p) & = \hCp \hx(t; \p),
  \end{split}
\end{align}
where
$\hE, \hA \in \Rrr$ and
\begin{equation}\label{eq:parameter_separable_ROM}
    \hBp = \hB_1 + \p_1 \hB_2 \in \bbR^{r \times 1}
    \quad \text{and} \quad
    \hCp = \hC_1 + \p_2 \hC_2 \in \bbR^{1 \times r}.
\end{equation}
One of the main results of~\cite{morBauBBetal11} states that,
in this specific case,
the $\HoL$ norm of the transfer function can be expressed as a weighted $\Htwo$
norm.
\begin{theorem}[{\cite[Theorem~5.1]{morBauBBetal11}}]\label{thm:Baur}
  Let $H(s; \p) = \Cp \myparen{sE - A}^{-1} \Bp$ be the transfer function
  of~\eqref{eq:pLTI_Baur} and
  $\p = \myparen{\p_1, \p_2}$ with $\p_1, \p_2 \in [0, 1]$.
  Define the auxiliary transfer function
   $G(s) =
    \begin{bsmallmatrix}
      C_1 \\
      C_2
    \end{bsmallmatrix}
    \myparen{s E - A}^{-1}
    \begin{bsmallmatrix}
      B_1 & B_2
    \end{bsmallmatrix}$
    and
    $
    L =
    \begin{bsmallmatrix}
    	1 & 0 \\
    	\frac{1}{2} & \frac{1}{2\sqrt{3}}
     \end{bsmallmatrix}$.
  Then $\normHoL{H} = \normHtwo{L\tran G L}$.
\end{theorem}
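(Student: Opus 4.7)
The plan is to unfold the double integral defining $\normHoL{H}^2$ via Fubini, evaluate the parameter integral in closed form using the polynomial moments of $\{1, \p\}$ on $[0, 1]$, and then recognize the resulting expression as the squared Frobenius norm of a bilateral congruence via cyclic trace manipulations.

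First, since $H$ is scalar-valued and $\Bp, \Cp$ have the parameter-separable forms in~\eqref{eq:parameter_separable_FOM}, I would rewrite
\begin{equation*}
  H(s; \p) = (C_1 + \p_2 C_2)(sE - A)^{-1}(B_1 + \p_1 B_2) = w(\p_2)\tran G(s) v(\p_1),
\end{equation*}
where $v(\p_1) \coloneqq \begin{bsmallmatrix} 1 \\ \p_1 \end{bsmallmatrix}$ and $w(\p_2) \coloneqq \begin{bsmallmatrix} 1 \\ \p_2 \end{bsmallmatrix}$. By Fubini, which is justified since $\normHoL{H} < \infty$ under \cref{thm:assumptions}, I would exchange the frequency integral with the double parameter integral so that, for each fixed $\omega$, the task reduces to evaluating $\intz \intz \abs{H(\imag \omega; \p)}^2 \dif{\p_1} \dif{\p_2}$. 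Using that $H(\imag\omega; \p)$ is a scalar (so $\abs{H}^2$ equals its own trace) together with cyclic invariance of the trace gives
\begin{equation*}
  \abs{H(\imag\omega; \p)}^2 = \trace*{w(\p_2) w(\p_2)\tran G(\imag\omega) v(\p_1) v(\p_1)\tran G(\imag\omega)\herm},
\end{equation*}
and integrating over $\p \in [0, 1]^2$ while pulling the $\p$-independent factor $G(\imag\omega)$ outside yields $\trace*{M G(\imag\omega) M G(\imag\omega)\herm}$, where
\begin{equation*}
  M \coloneqq \intz \begin{bsmallmatrix} 1 \\ \p \end{bsmallmatrix} \begin{bsmallmatrix} 1 & \p \end{bsmallmatrix} \dif{\p} = \begin{bsmallmatrix} 1 & 1/2 \\ 1/2 & 1/3 \end{bsmallmatrix}
\end{equation*}
is the Gram matrix of $\{1, \p\}$ on $[0, 1]$.

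The heart of the argument, and the step that motivates the peculiar choice of $L$ in the statement, is to recognize that $L$ is precisely a Cholesky-type factor of $M$: a direct check using $1/4 + 1/12 = 1/3$ verifies $L L\tran = M$. Substituting and cycling the trace one more time, while using that $L$ is real so $L\herm = L\tran$, collapses the expression into
\begin{equation*}
  \trace*{L L\tran G(\imag\omega) L L\tran G(\imag\omega)\herm} = \trace*{\myparen*{L\tran G(\imag\omega) L}\myparen*{L\tran G(\imag\omega) L}\herm} = \normF{L\tran G(\imag\omega) L}^2.
\end{equation*}
Re-integrating over $\omega$ then produces exactly $\normHtwo{L\tran G L}^2$, completing the argument. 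The main obstacle is essentially conceptual rather than computational: one must guess that such a clean closed form exists, after which the factorization $M = L L\tran$ and the cyclic trace identity do all the work.
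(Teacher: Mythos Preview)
Your argument is correct. The key steps---writing $H(\imag\omega;\p)$ as the bilinear form $w(\p_2)\tran G(\imag\omega) v(\p_1)$, integrating out the parameters to produce the Gram matrix $M = \begin{bsmallmatrix} 1 & 1/2 \\ 1/2 & 1/3 \end{bsmallmatrix}$, recognizing $M = LL\tran$, and then cycling the trace to assemble $\normF{L\tran G(\imag\omega) L}^2$---all go through as you describe.

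Note, however, that the paper does not actually prove this theorem: it is quoted verbatim from~\cite[Theorem~5.1]{morBauBBetal11} and used as a black box to set up the comparison in Section~4.1. So there is no ``paper's own proof'' to compare against here. Your proof is precisely the natural one and is, in essence, the argument given in the original reference; the only thing worth adding is that the identity $LL\tran = M$ is also used explicitly later in the paper (in the proof of Theorem~4.3), so your observation about the Cholesky factor is consistent with how the authors themselves exploit $L$.
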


To show how our FONC~\eqref{eq:Wilson_type} generalize
results from~\cite{morBauBBetal11}, first note that \cref{thm:Baur} also
implies that
$\normHoL[\big]{H - \hH} = \normHtwo[\big]{L\tran G L - L\tran \hG L}$
holds with transfer function
$\hG(s) =
\begin{bsmallmatrix}
  \hC_1 \\
  \hC_2
\end{bsmallmatrix}
\myparen{s \hE- \hA}^{-1}
\begin{bsmallmatrix}
  \hB_1 & \hB_2
\end{bsmallmatrix}$.
Now letting
\[
  B_L =
  \begin{bsmallmatrix}
    B_1 & B_2
  \end{bsmallmatrix}L,
  \quad
  \hB_L =
  \begin{bsmallmatrix}
    \hB_1 & \hB_2
  \end{bsmallmatrix}L,
  \quad
  C_L =
  L\tran
  \begin{bsmallmatrix}
    C_1 \\ C_2
  \end{bsmallmatrix},
  \quad \text{and} \quad
  \hC_L =
  L\tran
  \begin{bsmallmatrix}
    \hC_1 \\ \hC_2
  \end{bsmallmatrix},
\]
and observing that $L\tran G L$ and $L\tran \hG L$ are, respectively,
the transfer functions of $\myparen{E, A, B_L, C_L}$ and
$\myparen{\hE, \hA, \hB_L, \hC_L}$,
it follows that the FONC for this setting are
\begin{equation}\label{eq:Wilson_Baur}
  \begin{aligned}
    0
    & =
      \hQ_L\tran \hE \hP_L
      + \tQ_L\tran E \tP_L, \\
    0
    & =
      \hQ_L\tran \hA \hP_L
      + \tQ_L\tran A \tP_L, \\
    0
    & =
      \hQ_L\tran \hB_L
      + \tQ_L\tran B_L, \\
    0
    & =
      \hC_L \hP_L
      - C_L \tP_L,
  \end{aligned}
\end{equation}
with $\hP_L$, $\tP_L$, $\hQ_L$, $\tQ_L$ such that
\begin{equation}\label{eq:baur-pql}
  \begin{alignedat}{4}
    0
    & =
      \hA \hP_L \hE\tran
      + \hE \hP_L \hA\tran
      + \hB_L \hB_L\tran, \qquad
    &
    0
    & =
      \hA\tran \hQ_L \hE
      + \hE\tran \hQ_L \hA
      + \hC_L\tran \hC_L, \\
    0
    & =
      A \tP_L \hE\tran
      + E \tP_L \hA\tran
      + B_L \hB_L\tran, \qquad
    &
    0
    & =
      A\tran \tQ_L \hE
      + E\tran \tQ_L \hA
      -  C_L\tran \hC_L.
  \end{alignedat}
\end{equation}
We now elaborate how the FONC~\eqref{eq:Wilson_Baur} for
the parametric MOR problem defined by
\cref{eq:pLTI_Baur,%
  eq:parameter_separable_FOM,%
  eq:pLTIr_Baur,%
  eq:parameter_separable_ROM}
  is actually a special case of our more general FONC~\eqref{eq:Wilson_type}.
\begin{lemma}
  Let $\Sigmap$ and $\hSigmap$, respectively, be asymptotically stable systems as
  in~\eqref{eq:pLTI_Baur} and~\eqref{eq:pLTIr_Baur},
  $E$ and $\hE$ invertible,
  $B$, $C$, $\hB$, and $\hC$ as in~\eqref{eq:parameter_separable_FOM}
  and~\eqref{eq:parameter_separable_ROM},
  and $\p = \myparen{\p_1, \p_2}$ with $\p_1, \p_2 \in [0, 1]$.
  Then $\tP$, $\hP$, $\tQ$, and $\hQ$ satisfy
  \begin{subequations}\label{eq:form_thPQ}
    \begin{align}
      \label{eq:form_tP}
      \tPp & = \tP_1 + \p_1 \tP_2 + \p_1^2 \tP_3, \\
      \label{eq:form_hP}
      \hPp & = \hP_1 + \p_1 \hP_2 + \p_1^2 \hP_3, \\
      \label{eq:form_tQ}
      \tQp & = \tQ_1 + \p_2 \tQ_2 + \p_2^2 \tQ_3, \\
      \label{eq:form_hQ}
      \hQp & = \hQ_1 + \p_2 \hQ_2 + \p_2^2 \hQ_3,
    \end{align}
  \end{subequations}
  where for $i = 1, 2, 3$, matrices
  $\tP_i, \tQ_i \in \Rnr$ are tall and skinny with
  \begin{subequations}\label{eq:form_part_tP}
    \begin{align}
      \label{eq:form_part_tP1}
      0
      & =
        A \tP_1 \hE\tran
        + E \tP_1 \hA\tran
        + B_1 \hB_1\tran, \\
      \label{eq:form_part_tP2}
      0
      & =
        A \tP_2 \hE\tran
        + E \tP_2 \hA\tran
        + B_1 \hB_2\tran
        + B_2\hB_1\tran, \\
      \label{eq:form_part_tP3}
      0
      & =
        A \tP_3 \hE\tran
        + E \tP_3 \hA\tran
        + B_2 \hB_2\tran, \\[1ex]
      \notag
      0
      & =
        A\tran \tQ_1 \hE
        + E\tran \tQ_1 \hA
        - C_1\tran \hC_1, \\
      \notag
      0
      & =
        A\tran \tQ_2 \hE
        + E\tran \tQ_2 \hA
        - C_1\tran \hC_2
        - C_2\tran\hC_1, \\
      \notag
      0
      & =
        A\tran \tQ_3 \hE
        + E\tran \tQ_3 \hA
        - C_2\tran \hC_2,
    \end{align}
  \end{subequations}
  while $\hP_i, \hQ_i \in \Rrr$ are small square matrices and
  \begin{alignat*}{4}
    0
    & =
      \hA \hP_1 \hE\tran
      + \hE \hP_1 \hA\tran
      + \hB_1 \hB_1\tran, \quad
    &
    0
    & =
      \hA\tran \hQ_1 \hE
      + \hE\tran \hQ_1 \hA
      + \hC_1\tran \hC_1, \\
    0
    & =
      \hA \hP_2 \hE\tran
      + \hE \hP_2 \hA\tran
      + \hB_1 \hB_2\tran
      + \hB_2\hB_1\tran, \quad
    &
    0
    & =
      \hA\tran \hQ_2 \hE
      + \hE\tran \hQ_2 \hA
      + \hC_1\tran \hC_2
      + \hC_2\tran\hC_1, \\
    0
    & =
      \hA \hP_3 \hE\tran
      + \hE \hP_3 \hA\tran
      + \hB_2 \hB_2\tran, \quad
    &
    0
    & =
      \hA\tran \hQ_3 \hE
      + \hE\tran \hQ_3 \hA
      + \hC_2\tran \hC_2.
  \end{alignat*}
\end{lemma}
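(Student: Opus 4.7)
The plan is to exploit the linearity of the Sylvester equations~\eqref{eq:controllability_mix} and~\eqref{eq:controllability_red} (and their observability analogues~\eqref{eq:observability_mix} and~\eqref{eq:observability_red}) together with the fact that, under the hypotheses of the lemma, the coefficient matrices $E, A, \hE, \hA$ are independent of $\p$. All the $\p$-dependence sits in the right-hand sides, which are quadratic polynomials in $\p_1$ (for the controllability side) or $\p_2$ (for the observability side), so the solutions inherit the same polynomial structure.

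First I would work on $\tPp$. Substituting $\Bp = B_1 + \p_1 B_2$ and $\hBp = \hB_1 + \p_1 \hB_2$ into~\eqref{eq:controllability_mix} gives
\begin{equation*}
  0 = A \tPp \hE\tran + E \tPp \hA\tran
  + \bigl(B_1 \hB_1\tran\bigr)
  + \p_1 \bigl(B_1 \hB_2\tran + B_2 \hB_1\tran\bigr)
  + \p_1^2 \bigl(B_2 \hB_2\tran\bigr).
\end{equation*}
I would then introduce $\tP_1, \tP_2, \tP_3$ as the unique solutions of the Sylvester equations~\eqref{eq:form_part_tP1}, \eqref{eq:form_part_tP2}, \eqref{eq:form_part_tP3} (existence and uniqueness following from asymptotic stability of $\Sigmap$ and $\hSigmap$, which guarantees $\eig{A,E} \cap \eig{-\hA, \hE} = \emptyset$, so the associated Sylvester operator is invertible and independent of $\p$). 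By linearity, the matrix-valued function $\tP_1 + \p_1 \tP_2 + \p_1^2 \tP_3$ satisfies exactly the same Sylvester equation as $\tPp$, and uniqueness then forces the equality~\eqref{eq:form_tP}. The same recipe, applied to the Lyapunov equation~\eqref{eq:controllability_red} with $\hBp \hBp\tran$ expanded in $\p_1$, yields~\eqref{eq:form_hP}; here uniqueness uses that $\hSigmap$ asymptotically stable implies the Lyapunov operator $X \mapsto \hA X \hE\tran + \hE X \hA\tran$ is invertible.

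The observability blocks $\tQp$ and $\hQp$ are handled symmetrically. I would substitute $\Cp = C_1 + \p_2 C_2$ and $\hCp = \hC_1 + \p_2 \hC_2$ into~\eqref{eq:observability_mix} and~\eqref{eq:observability_red}, noting that only the right-hand sides $-\Cp\tran \hCp$ and $\hCp\tran \hCp$ depend on $\p_2$ (quadratically), while the coefficient matrices $E, A, \hE, \hA$ remain constant. Defining $\tQ_i$ and $\hQ_i$ for $i = 1,2,3$ as the unique solutions of the listed Sylvester and Lyapunov equations, linearity and uniqueness give~\eqref{eq:form_tQ} and~\eqref{eq:form_hQ}.

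There is no real obstacle: the proof amounts to an ``equate coefficients of powers of $\p_i$'' argument made rigorous by the invertibility of the Sylvester/Lyapunov operators involved. The one subtlety worth stating explicitly is that, because $B$ and $\hB$ depend only on $\p_1$ and $C$ and $\hC$ depend only on $\p_2$, the controllability blocks $\tPp, \hPp$ have no $\p_2$-dependence and the observability blocks $\tQp, \hQp$ have no $\p_1$-dependence; this is what allows the decompositions in~\eqref{eq:form_thPQ} to be polynomials in a single scalar variable rather than in the full parameter $\p$.
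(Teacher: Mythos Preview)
Your proposal is correct and follows essentially the same approach as the paper: substitute the parameter-separable forms into~\eqref{eq:controllability_mix}, observe that the candidate $\tP_1 + \p_1 \tP_2 + \p_1^2 \tP_3$ solves the same Sylvester equation by linearity, and invoke uniqueness (which the paper justifies via~\cite[Theorem~1]{Chu87} using regularity of the pencils and disjointness of their spectra, exactly your eigenvalue-separation argument). Your added remark that the controllability blocks depend only on $\p_1$ and the observability blocks only on $\p_2$ is a helpful clarification that the paper leaves implicit.
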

\begin{proof}
  Substituting both $\Bp$ from~\eqref{eq:parameter_separable_FOM} and $\hBp$
  from~\eqref{eq:parameter_separable_ROM} into~\eqref{eq:controllability_mix},
  we obtain that $\tPp$ is a solution of the following Sylvester equation
  \begin{align}
    \label{eq:Sylvester_decomposed}
    0 = {}
    &
      A X \hE\tran
      + E X \hA\tran
      + B_1 \hB_1\tran
      + \p_1 \myparen*{B_1 \hB_2\tran + B_2 \hB_1\tran}
      + \p_1^2 B_2 \hB_2\tran,
  \end{align}
  where $X \in \bbR^{n \times n}$ is unknown.
  Meanwhile, by linearly
  combining~\eqref{eq:form_part_tP1}--\eqref{eq:form_part_tP3},
  we see that $\tP_1 + \p_1 \tP_2 + \p_1^2 \tP_3$ also
  solves~\eqref{eq:Sylvester_decomposed}.
  We now explain that~\eqref{eq:Sylvester_decomposed} in fact has a unique
  solution,
  and so $\tPp = \tP_1 + \p_1 \tP_2 + \p_1^2 \tP_3$.
  For any given $\p \in [0, 1] \times [0, 1]$,
  by~\cite[Theorem~1]{Chu87},
  there exists a unique solution to~\eqref{eq:Sylvester_decomposed}
  if the matrix pencils $A - \lambda E$ and $-\hA\tran - \lambda \hE\tran$
  are both regular and have no eigenvalues in common.
  Since we assume that $E$ and $\hE$ are nonsingular,
  both pencils must be regular.
  Furthermore, as we also assume that $\alpha(A, E) < 0$  and
  $\alpha(\hA, \hE) < 0$,
  the two pencils cannot share eigenvalues since $\alpha(\hA, \hE) < 0$ means
  that all eigenvalues of $-\hA\tran - \lambda \hE\tran$ are in the open right
  half-plane.
  The equalities in \cref{eq:form_hP,eq:form_tQ,eq:form_hQ} are obtained in an
  analogous fashion.
\end{proof}

\begin{theorem}
  Let $\Sigmap$ and $\hSigmap$, respectively, be asymptotically stable systems as
  in~\eqref{eq:pLTI_Baur} and~\eqref{eq:pLTIr_Baur},
  $E$ and $\hE$ invertible,
  $B$, $C$, $\hB$, and $\hC$ as in~\eqref{eq:parameter_separable_FOM}
  and~\eqref{eq:parameter_separable_ROM}.
  Then, \eqref{eq:Wilson_Baur} is a special case of FONC~\eqref{eq:Wilson_type}.
\end{theorem}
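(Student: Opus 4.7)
The plan is to recognize the Baur et al.\ setting as an instance of the general framework from \cref{def:rom_set} and then specialize the four FONC in \eqref{eq:Wilson_type} one by one. Concretely, I would take $q_{\hE} = q_{\hA} = 1$ with $\he_1 \equiv \ha_1 \equiv 1$, $\hE_1 = \hE$, $\hA_1 = \hA$ to encode that $\hE$ and $\hA$ are constant, together with $q_{\hB} = q_{\hC} = 2$ and the scalar weights $\hb_1 \equiv 1$, $\hb_2(\p) = \p_1$, $\hc_1 \equiv 1$, $\hc_2(\p) = \p_2$, so that \eqref{eq:rom_para_sep} reproduces the parameter-separable forms in \eqref{eq:parameter_separable_FOM} and \eqref{eq:parameter_separable_ROM}.

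Next I would invoke the preceding lemma: $\tPp, \hPp$ depend only on $\p_1$ while $\tQp, \hQp$ depend only on $\p_2$. Averaging \eqref{eq:controllability_mix}, \eqref{eq:controllability_red}, \eqref{eq:observability_mix}, and \eqref{eq:observability_red} over the relevant parameter coordinate and comparing with \eqref{eq:baur-pql} via the same Sylvester uniqueness argument used in the proof of that lemma, I would identify
\begin{equation*}
  \tP_L = \int_0^1 \tPp \dif \p_1, \quad
  \hP_L = \int_0^1 \hPp \dif \p_1, \quad
  \tQ_L = \int_0^1 \tQp \dif \p_2, \quad
  \hQ_L = \int_0^1 \hQp \dif \p_2.
\end{equation*}
The crucial algebraic ingredient is the moment identity $L L\tran = \bigl[\frac{1}{i+j-1}\bigr]_{i,j=1,2}$, which forces $\int_0^1 \Bp \hBp\tran \dif \p_1 = B_L \hB_L\tran$ and $\int_0^1 \Cp\tran \hCp \dif \p_2 = C_L\tran \hC_L$, and is precisely what makes Baur et al.'s auxiliary factor $L$ appear naturally from our general setup.

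For the $\hE$ and $\hA$ conditions in \eqref{eq:Wilson_type}, the scalar weights are constant, so by Fubini and the decoupling of the $\p_1$- and $\p_2$-dependencies, $\intp \hQp\tran \hA \hPp \dif\p = \hQ_L\tran \hA \hP_L$, and analogously for the other three summands, directly yielding the first two equalities of \eqref{eq:Wilson_Baur}. For the $\hB$ conditions, the two equations indexed by $k \in \{1,2\}$ form a $2 \times 2$ linear system with coefficient matrix $L L\tran$ acting on the columns $\hQ_L\tran \hB_j + \tQ_L\tran B_j$; since $L L\tran$ is invertible, this system is equivalent to $0 = \hQ_L\tran [\hB_1 \ \hB_2] + \tQ_L\tran [B_1 \ B_2]$, and right-multiplying by $L$ produces the third equation of \eqref{eq:Wilson_Baur}. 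The $\hC$ conditions are handled symmetrically via $C_L = L\tran [C_1\tran \ C_2\tran]\tran$.

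The main obstacle is bookkeeping rather than conceptual depth: one must carefully separate the $\p_1$- and $\p_2$-dependencies of the Gramian blocks, recognize $L L\tran$ as the Gram matrix of $(1, \p)$ on $[0,1]$, and verify that the combinatorial structure of \eqref{eq:rom_para_sep} with the particular weights $\hb_k, \hc_\ell$ matches Baur et al.'s transformation $L$. Once these correspondences are in place, each of the four equations in \eqref{eq:Wilson_Baur} drops out immediately from its counterpart in \eqref{eq:Wilson_type}.
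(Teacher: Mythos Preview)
Your proposal is correct and follows essentially the same route as the paper. The paper expands $\tP,\hP,\tQ,\hQ$ explicitly as quadratic polynomials in $\p_1$ or $\p_2$ (via the preceding lemma), integrates term by term to obtain auxiliary matrices $\tV,\hV,\tW,\hW$, and only at the end verifies that these coincide with $\tP_L,\hP_L,\tQ_L,\hQ_L$ by summing the relevant Sylvester equations; you instead identify $\tP_L=\int_0^1\tPp\,\dif\p_1$ etc.\ up front by averaging the Sylvester equations and invoking uniqueness, then use Fubini and the decoupling of $\p_1$- and $\p_2$-dependence to factor the double integrals directly. Both arguments hinge on exactly the same three ingredients---the lemma's separation of variables, the moment identity $LL\tran=\bigl[\tfrac{1}{i+j-1}\bigr]_{i,j}$, and uniqueness of Sylvester solutions---and the treatment of the $\hB,\hC$ conditions (recognizing $LL\tran$ as an invertible coefficient matrix and then right-multiplying by $L$) is identical to the paper's.
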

\begin{proof}
  Substituting the parameter-separable forms~\eqref{eq:form_thPQ}
  into~\eqref{eq:Wilson_type_E},
  we obtain
  \begin{align*}
    0 = {}
    &
      \intp
      \myparen*{
        \hQp\tran \hA \hPp
        + \tQp\tran A \tPp
      }
      \dif{\p} \\
    = {}
    &
      \intz \intz
      \left(
        \myparen*{\hQ_1 + \p_2 \hQ_2 + \p_2^2 \hQ_3}\tran
        \hA
        \myparen*{\hP_1 + \p_1 \hP_2 + \p_1^2 \hP_3}
      \right. \\
    & \qquad\qquad +
      \left.
        \myparen*{\tQ_1 + \p_2 \tQ_2 + \p_2^2 \tQ_3}\tran
        A
        \myparen*{\tP_1 + \p_1 \tP_2 + \p_1^2 \tP_3}
      \right)
      \dif{\p_1}
      \dif{\p_2} \\
    = {}
    &
      \hW\tran \hA \hV
      + \tW\tran A \tV,
  \end{align*}
  where
  \begin{equation}\label{eq:baur-vw}
    \begin{alignedat}{4}
      \hV & \coloneqq \hP_1 + \tfrac{1}{2} \hP_2 + \tfrac{1}{3} \hP_3, & \qquad
      \tV & \coloneqq \tP_1 + \tfrac{1}{2} \tP_2 + \tfrac{1}{3} \tP_3, \\
      \hW & \coloneqq \hQ_1 + \tfrac{1}{2} \hQ_2 + \tfrac{1}{3} \hQ_3, &
      \tW & \coloneqq \tQ_1 + \tfrac{1}{2} \tQ_2 + \tfrac{1}{3} \tQ_3.
    \end{alignedat}
  \end{equation}
  Following the same procedure for~\eqref{eq:Wilson_type_A},
  we have that
  \begin{equation*}
    0 = \hW\tran \hE \hV + \tW\tran E \tV.
  \end{equation*}
  Noting that
  $LL\tran =
  \begin{bsmallmatrix}
    1 & \sfrac{1}{2} \\
    \sfrac{1}{2} & \sfrac{1}{3}
  \end{bsmallmatrix}$,
  from~\eqref{eq:Wilson_type_B} it follows that
  \begin{align*}
    \begingroup 
    \setlength\arraycolsep{2.25pt}
    \begin{bmatrix}
      0 & 0
    \end{bmatrix}
    \endgroup
    \! = {}
    &
      \intz \intz
      \begingroup 
      \setlength\arraycolsep{2.25pt}
      \begin{bmatrix}
        1 & \p_1
      \end{bmatrix}
      \endgroup
           \left(
        \myparen*{\hQ_1 + \p_2 \hQ_2 + \p_2^2 \hQ_3}\tran
        \myparen*{\hB_1 + \p_1 \hB_2}
      \right. \\
    & \qquad\qquad\qquad\qquad +
      \left.
        \myparen*{\tQ_1 + \p_2 \tQ_2 + \p_2^2 \tQ_3}\tran
        \myparen*{B_1 + \p_1 B_2}
      \right)
      \dif{\p_1}
      \dif{\p_2} \\
    = {}
    &
      \mybrack*{
        \begin{array}{@{}c|c@{}}
          \hW\tran \myparen*{\hB_1 + \tfrac{1}{2} \hB_2}
          + \tW\tran \myparen*{B_1 + \tfrac{1}{2} B_2}
          &
            \hW\tran \myparen*{\tfrac{1}{2} \hB_1 + \tfrac{1}{3} \hB_2}
            + \tW\tran \myparen*{\tfrac{1}{2} B_1 + \tfrac{1}{3} B_2}
        \end{array}
      } \\
    = {}
    &
      \hW\tran
      \begin{bmatrix}
        \hB_1 & \hB_2
      \end{bmatrix}
      L L\tran
      +
      \tW\tran
      \begin{bmatrix}
        B_1 & B_2
      \end{bmatrix}
      L L\tran.
  \end{align*}
  Multiplying the equation above on the right by the inverse of $L\tran$,
  we obtain
  \begin{equation*}
    0 =
    \hW\tran \hB_L
    +
    \tW\tran B_L.
  \end{equation*}
  Analogously,~\eqref{eq:Wilson_type_C} simplifies to
  \begin{equation*}
    0 =
    \hC_L \hV
    -
    C_L \tV.
  \end{equation*}
  It remains to show that $\hP_L$, $\tP_L$, $\hQ_L$, $\tQ_L$
  in~\eqref{eq:baur-pql} are equal, respectively,
  to $\hV$, $\tV$, $\hW$, $\tW$ in~\eqref{eq:baur-vw}.
  We show this only for $\tV$,
  as it follows similarly for the remaining matrices.
  By the following weighted sum
  $\eqref{eq:form_part_tP1}
  + \tfrac{1}{2}\eqref{eq:form_part_tP2}
  + \tfrac{1}{3}\eqref{eq:form_part_tP3}$,
  we obtain
  \begin{align*}
    0 = {}
    &
      A \tV \hE\tran
      + E \tV \hA\tran
      + B_1 \hB_1\tran
      + \tfrac{1}{2} \myparen*{B_1 \hB_2\tran + B_2 \hB_1\tran}
      + \tfrac{1}{3} B_2 \hB_2\tran \\
    = {}
    &
      A \tV \hE\tran
      + E \tV \hA\tran
      +
      \begin{bmatrix}
        B_1 & B_2
      \end{bmatrix}
      L L\tran
      \begin{bmatrix}
        \hB_1 & \hB_2
      \end{bmatrix}\tran \\
      = {}
      &
       A \tV \hE\tran
      + E \tV \hA\tran
      + B_L \hB_L\tran.
  \end{align*}
\end{proof}

\subsection{Unit-disk optimality results of Grimm}%
Let $\bbC_+$ denote the open right half-plane and
$\mathbb{D} = \{z \in \bbC \colon \abs{z} \le 1\}$ the complex unit disk.
Grimm~\cite[Chapter~3]{morGri18} considered the following variant of the $\HoL$
norm for single-input, single-output systems:
\begin{equation*}
  \normHoL*{G} \coloneqq
  \myparen*{
    \frac{1}{4 \pi^2}
    \int_{-\infty}^{\infty}
    \int_{0}^{2 \pi}
    \abs*{G\myparen*{\imag \omega, e^{\imag\theta}}}^2
    \dif{\theta} \dif{\omega}
  }^{1 / 2},
\end{equation*}
where function $\fundef{G}{\bbC \times \bbC}{\bbC}$ is analytic on
$\bbC_+ \times \mathbb{D}$.
The FOM is assumed to have the form as in~\eqref{eq:pLTI}, while
Grimm additionally assumes that $E, A, B, C$ are analytic.
The aim is to find an optimal ROM whose transfer function is of the form
\begin{equation*}
  \hH(s; \p) =
  \sum_{i = 1}^{r_s}
  \sum_{j = 1}^{r_p}
  \frac{\phi_{i, j}}{(s - \lambda_i) (\p - \pi_j)},
\end{equation*}
where
$\phi_{i, j} \in \bbC$,
$\lambda_i \in \bbC$ with $\Real{\lambda_i} < 0$, and
$\pi_j \in \bbC$ with $\abs{\pi_j} > 0$,
for $i \in [r_s]$ and $j \in [r_p]$,
with some additional assumptions on $\phi_{i, j}$, $\lambda_i$ and $\pi_j$ such
that the transfer function $\hH$ is real
(see~\cite[Section~$3.2.2$ and Lemma~$3.2.9$]{morGri18}).
The interpolatory FONC were derived in~\cite[Theorem~3.3.4]{morGri18} and an
optimization-based approach for satisfying them was proposed.

Note that one possible state-space realization of $\hH$ is
\begin{align*}
  \dot{\hx}(t; \p)
  & =
    \begin{bsmallmatrix}
      \lambda_1 \\
      & \ddots \\
      & & \lambda_{r_s}
    \end{bsmallmatrix}
    \hx(t; \p) +
    \begin{bsmallmatrix}
      1 \\
      \vdots \\
      1
    \end{bsmallmatrix}
    u(t), \\
  \hy(t; \p)
  & =
    \sum_{j = 1}^{r_p}
    \frac{1}{\p - \pi_j}
    \begin{bmatrix*}
      \phi_{1, j}
      & \cdots
      & \phi_{r_s, j}
    \end{bmatrix*}
    \hx(t; \p).
\end{align*}
Defining the matrix $\Phi = {[\phi_{i,j}]}_{i \in [r_s], j \in [r_p]}$,
$\hC$ can be written in parameter-separable form
$
  \hCp = \sum_{j=1}^{r_p} \frac{1}{\p - \pi_j} \myparen{\Phi e_j}\tran,
$
with functions $\hc_{j}(\p) = \frac{1}{\p - \pi_j}$ and reduced-order matrices
$\hC_j = \myparen{\Phi e_j}\tran$ for $j \in [r_p]$.
The main similarity to our setting is the usage of parameter-separable forms.
However, there are appreciable differences.
The first is that here the scalar parameter is varying in $\mathbb{D}$,
while in our setting, the parameters live in a subset of $\Rd$.
The second is that only $\hC$ varies with the parameter, and so, the poles
$\lambda_i$ of the ROM are fixed.
However, when either $E$ or $A$ is parametric, the system poles of the FOM
vary with respect to $\p$, and indeed the poles may be quite sensitive to
changes in $\p$.
Consequently, using a ROM with fixed poles may significantly limit its modeling
performance over the entire parameter domain.
In fact, in~\cref{sec:example_triple_chain}, we provide an experiment
illustrating exactly this.
Finally, here the scalar functions $\hc_j$ are not fixed in advance and the
scalars $\pi_j$ are optimized,
but our approach could be directly extended to additionally allow optimization
variables in the scalar functions.

\subsection{Discretized optimality results of Petersson}%
A discretized, frequency-weighted $\HoL$-like objective function
is considered in~\cite[Chapter~$5$]{morPet13}, i.e.,
\begin{align}\label{eq:petersson-obj-fun}
  \cJ_{\mathrm{d}}(\hSigmap)
  \coloneqq
  \sum_{i = 1}^{n_p}
  \normHtwo*{
    W_i^{\text{out}}
    \myparen*{H\myparen[\big]{\cdot; \p^{(i)}}
      - \hH\myparen[\big]{\cdot; \p^{(i)}}}
    W_i^{\text{in}}
  }^2,
\end{align}
for fixed $\p^{(1)}, \p^{(2)}, \dots, \p^{(n_p)} \in \pset$,
where $W_i^{\text{in}}$ and $W_i^{\text{out}}$ are asymptotically stable
transfer functions of input and output weights with respect to the parameter
$\p^{(i)}$ for $i \in [n_p]$.
Petersson uses similar parameter-separable forms for $\hA, \hB, \hC$ as we do,
except with fixed $\hE(\p) = I$.
For the function in~\eqref{eq:petersson-obj-fun},
Petersson derived Wilson-type FONC and also proposed an optimization-based approach
to satisfy them.

To compare this approach to ours,
note that the Lebesgue measure in the definition $\HoL$ norm~\eqref{eq:HoL_norm}
can be replaced with any nontrivial, finite, Borel measure~$\measure$ over
$\pset$, and our results will still hold.
In fact,
the only necessary change is using the measure theory version of the Leibniz
rule~\cite[Theorem~2.27]{Fol17} in the proof of \cref{thm:H2L2_gradients}.
Now letting the measure~$\measure$ be the sum of Dirac measures
$\delta_{\p^{(i)}}$,
i.e., $\mu(S) = \card{S \cap \{\p^{(1)}, \p^{(2)}, \dots, \p^{(n_p)}\}}$ for
every subset $S$ of $\pset$,
we recover the non-weighted discretized $\HoL$ norm as
in~\eqref{eq:petersson-obj-fun}.
Therefore, after including weights, it can be seen that our work is a
generalization of Petersson’s results.
Furthermore,
for the setting where $\measure$ is a Lebesgue measure,
the disadvantage of using~\eqref{eq:petersson-obj-fun} is that the parameter
values $\p^{(1)}, \p^{(2)}, \dots, \p^{(n_p)}$ have to be chosen in advance,
i.e., fixed.
In contrast, our approach allows for adaptive quadrature and thus provides
guarantees on the numerical accuracy, unlike parameter values chosen a priori.



\section{A TSIA-based algorithm for parametric MOR}%
\label{sec:ptsia}
For non-parametric systems,
TSIA is the fixed-point iteration given by
\begin{alignat*}{4}
  \hE_{k + 1} & = -\hQ_k\mtran \tQ_k\tran E \tP_k \hP_k^{-1}, \quad &
  \hA_{k + 1} & = -\hQ_k\mtran \tQ_k\tran A \tP_k \hP_k^{-1}, \\
  \hB_{k + 1} & = -\hQ_k\mtran \tQ_k\tran B, \quad &
  \hC_{k + 1} & = C \tP_k \hP_k^{-1},
\end{alignat*}
based on the Wilson conditions~\eqref{eq:Wilson},
where $\tP_k$, $\tQ_k$, $\hP_k$, $\hQ_k$ are Gramian blocks coming from
$(\hE_k, \hA_k, \hB_k, \hC_k)$.
Furthermore,
the inverses of $\hP_k$ and $\hQ_k$ are in fact not even needed in TSIA,
since
\begin{equation*}
  \hE_{k + 1} = \tQ_k\tran E \tP_k, \quad
  \hA_{k + 1} = \tQ_k\tran A \tP_k, \quad
  \hB_{k + 1} = \tQ_k\tran B, \quad
  \hC_{k + 1} = C \tP_k,
\end{equation*}
defines an equivalent ROM
(also, $\tP_k$ and $\tQ_k$ can be first orthonormalized).

When we began this work on $\HoL$ MOR, we originally considered
extending TSIA using the parametric FONC~\eqref{eq:Wilson_type}.
However, limitations with this approach quickly became apparent.
First, this does not permit all parameter-separable forms in the ROM\@.
For instance,
inserting $\hA$ from~\eqref{eq:rom_para_sep} into~\eqref{eq:Wilson_type_E}
yields
\begin{align*}
  0
  & =
    \intp
    \he_i(\p)
    \myparen*{
      \hQp\tran \sum_{j = 1}^{q_{\hA}} \myparen*{\ha_j(\p) \hA_j} \hPp
      + \tQp\tran \Ap \tPp
    }
    \dif{\p},
  & i \in [q_{\hE}],
\end{align*}
which is a linear system of $q_{\hE} r^2$ equations and $q_{\hA} r^2$ unknowns
(the entries of the matrices $\hA_j$).
Therefore, to have a unique solution, $q_{\hA}$ and $q_{\hE}$ must be equal.
Second,
even for the simple example
\begin{equation*}
  \begin{alignedat}{4}
    q_{\hA} &= q_{\hE} = 2, \quad \qquad &
    \ha_1(\p) &= \he_1(\p) = \hb_1(\p) = \hc_1(\p) = 1, \\
    q_{\hB} &= q_{\hC} = 1, &
    \ha_2(\p) &= \he_2(\p) = \p,
  \end{alignedat}
\end{equation*}
we observed that our parametric TSIA algorithm would not converge in practice,
and in particular, would not preserve stability.
A key difference in this parametric setting is that
it does not seem possible to eliminate $\hP_k(\p)$ and $\hQ_k(\p)$ (unlike in
non-parametric TSIA).
This can be problematic since inverses of matrices involving $\hP_k$ and $\hQ_k$
are needed to obtain the next iterate, and
these matrices may be ill-conditioned for $\p \in \pset$.
In contrast, our new approach, which we are about to describe,
can use any parameter-separable form for the ROM without ever needing such
inverses.



\section{An optimization-based approach for parametric MOR}%
\label{sec:opt_based}
The objective function in~\eqref{eq:opt_problem_para} is smooth,
and via \cref{thm:H2L2_gradients},
we have derived its gradient with respect to the reduced-order matrices given
in~\eqref{eq:rom_para_sep}.
Before we take advantage of these properties for our new optimization-based
algorithm for $\HoL$-optimal MOR,
we first address the stability constraint in~\eqref{eq:opt_problem_para}.

\subsection{Evaluating the stability constraint}
For a given $\p \in \pset$, it is easy to check
whether or not $\spab{\hAp, \hEp} < 0$ holds,
but checking whether a ROM is asymptotically stable over all $\p \in \pset$ is more difficult.
We propose testing whether $\max_{\p \in \pset} \spab{\hAp, \hEp} < 0$ holds via Chebfun~\cite{DriHT14},
which ``is an open-source package for computing with functions to about
$15$-digit accuracy.''\footnote{This quote is taken from
  \url{http://www.chebfun.org}, where Chebfun can be downloaded.}
By having Chebfun build a high-fidelity interpolant approximation (called a \texttt{chebfun}) to
$\spab{\hAp, \hEp}$ on domain~$\pset$,
we can then easily and efficiently obtain its global maximizer(s).
This allows us to reliably ascertain stability over $\pset$.

\subsection{Our optimization-based MOR algorithm}
Since the objective function in~\eqref{eq:opt_problem_para} is smooth
and we can compute its gradient,
we can consider applying fast optimization techniques, e.g., BFGS,
in order to compute locally optimal ROMs.
The number of optimization variables for a ROM given
by~\eqref{eq:rom_para_sep} is
$N = \myparen{q_{\hE} + q_{\hA}} r^{2} + \myparen{q_{\hB} m + q_{\hC} p} r$.
In many settings, $N$ will be relatively small, e.g., since $r \ll n$,
which makes BFGS an appropriate and efficient choice;
BFGS does $\bo{N^2}$ work per iteration and uses $\bo{N^2}$ memory
but converges superlinearly under sufficient smoothness conditions.
If $N$ is large enough to make BFGS impractical, limited-memory BFGS
(L-BFGS) is a good alternative.
Thus, we expect the cost of such an optimization-based algorithm actually to be
dominated by the costs just to evaluate the objective function~$\cJ$ and its
gradient at different ROMs encountered by the algorithm.
While directly evaluating $\cJ$ and $\nabla \cJ$ would be expensive,
we now explain how these computations can be made much cheaper.

Note that the objective function given in~\eqref{eq:opt_problem_para} can
be written as
\begin{align}
  \notag
  \cJ\myparen[\big]{\hSigmap}
  = {}
  &
    \intp
    \trace*{
      \Cp \Pp \Cp\tran
      + \hCp \hPp \hCp\tran
      - 2 \Cp \tPp \hCp\tran
    }
    \dif{\p} \\
  \label{eq:h2l2_sep}
  = {}
  &
    \normHoL{H}^2
    +
    \intp
    \trace*{
      \hCp \hPp \hCp\tran
      - 2 \Cp \tPp \hCp\tran
    }
    \dif{\p}
\end{align}
Since the first term in~\eqref{eq:h2l2_sep}, i.e.,
the squared $\HoL$ norm of the FOM,
is actually a constant,
for the purpose of finding minimizers,
we can replace $\cJ$ in~\eqref{eq:opt_problem_para} with
\begin{align}
  \label{eq:better_J}
  \bJ\myparen[\big]{\hSigmap}
  =
  \intp
  \trace*{
    \hCp \hPp \hCp\tran
    - 2 \Cp \tPp\hCp\tran
  }
  \dif{\p},
\end{align}
noting that $\nabla \bJ = \nabla \cJ$ of course still holds.
For a given $\p \in \pset$,
$\tPp$ in~\eqref{eq:better_J} is obtained by solving the $n \times r$ Sylvester
equation~\eqref{eq:controllability_mix}, which in turn involves solving
shifted linear systems $(s \Ep - \Ap) x = b$.
Using the convention that solving a (shifted) linear system is $\bo{S}$ work,
where $S$ varies between $n$ and $n^3$ depending on the sparsity and the
structure of the matrix and the solver that is used, it follows that
$\tPp$ can be obtained in $\bo{r S + n r^2}$ work~\cite{morBenKS11}.
Meanwhile, via solving the $r \times r$ Lyapunov
equation~\eqref{eq:controllability_red},
we obtain $\hPp$ in~\eqref{eq:better_J} in just $\bo{r^3}$ work~\cite{BarS72}.
In contrast, if we were to evaluate~\eqref{eq:h2l2_sep},
we would need $\Pp$,
which would involve solving a large $n \times n$ Lyapunov
equation~\eqref{eq:controllability_full}, and
thus be much more expensive to compute.
While~\eqref{eq:better_J} and its gradient requires evaluating an integral for
each entry in the ROM matrices in~\eqref{eq:rom_para_sep},
these integrals can be bundled together in a single call to \texttt{integral} in
\matlab{} such that the integrals make use of the same quadrature points.
This is a crucial implementation detail as otherwise $\tPp$ and $\hPp$ would
likely be recomputed many times over for the same values of $\p \in \pset$
during integration.
Finally, note that we do not even evaluate~\eqref{eq:better_J} (and its
gradient) if we detect that $\max_{\p \in \pset} \spab{\hAp, \hEp} \geq 0$;
in this case, we simply return $\bJ=\infty$ and any vector for~$\nabla \bJ$.
This serves two purposes.
First, it ensures that the optimization solver will only accept asymptotically
stable ROMs on each iteration,
thus guaranteeing we always compute feasible (asymptotically stable) solutions
to~\eqref{eq:h2l2_sep}.
This also means that we do not need to use a solver for constrained optimization to find solutions to~\eqref{eq:h2l2_sep},
i.e., an unconstrained optimization solver suffices.
Second, we also avoid the cost of evaluating $\bJ$ and $\nabla \bJ$ at any unstable
ROMs encountered, e.g., within line searches.

\subsection{Additional implementation and termination details}
Although our optimization-based algorithm ensures that it will only ever accept
asymptotically stable ROMs on each iteration, it still must be initialized at
an asymptotically stable ROM\@.
But this is generally easy to satisfy via a variety of techniques.
For example,
assuming that $\he_1$ and $\ha_1$ are positive over $\pset$,
we can easily construct an invertible $\hE_1$ and $\hA_1$ such
that $\spab{\hA_1, \hE_1} < 0$ and then set $\hE_i = 0$ and $\hA_j = 0$
for \mbox{$i, j \ge 2$}.
Alternatively, starting with an arbitrary ROM with matrix-valued
functions of the form as in~\eqref{eq:rom_para_sep} but not in $\cR$,
one can use nonsmooth optimization techniques to minimize
$\max_{\p \in \pset} \spab[\big]{\hA(\p), \hE(\p)}$ until it is negative and
then use the resulting ROM to initialize our method.
Even though our method ensures asymptotically stable ROMs at every iteration,
encountering nearly unstable ROMs may cause numerical
issues when computing $\tPp$ and $\hPp$, e.g.,
these computed matrices may contain \texttt{inf} or \texttt{NaN} entries
or \texttt{lyap} may throw an error when solving~\eqref{eq:controllability_red}.
However, these scenarios are trivially handled by detecting these cases
and once again just returning $\infty$ for the value of $\bJ$.
Finally, we propose the following tolerance criterion to determine
when to halt optimization:
\begin{align}
  \label{eq:our_conv_crit}
  \frac{\normHoL[\big]{\hH^i - \hH^{i-1}}}{\normHoL[\big]{\hH^{i-1}}}
  < \textsf{tol},
\end{align}
where $\hH^{i}$ denotes the transfer function of a ROM~\eqref{eq:pLTIr} in the
$i$th iteration of optimization.
Note that~\eqref{eq:our_conv_crit} is cheap to compute (it uses only
ROM matrices) and measures the relative change in performance between consecutive ROMs during
optimization.
Thus, when the ROMs are not changing significantly, this quantity will be small.



\section{Numerical experiments}\label{sec:numerical_experiments}
All
\begin{figure}[t]
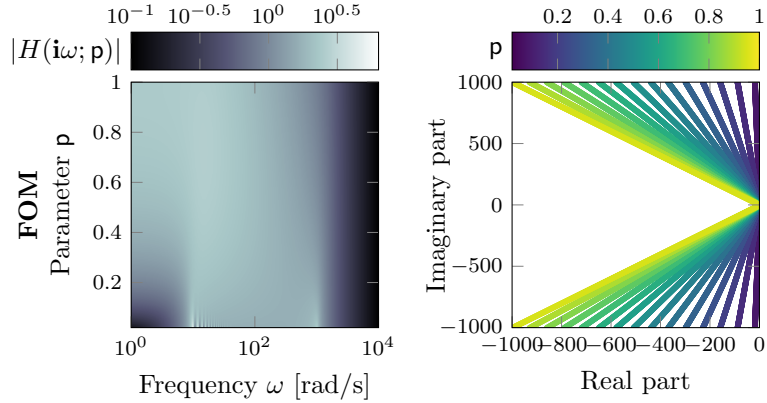

  \centering
  \surfpolefomplots{Synthetic}
  \caption{Frequency response magnitude $\abs{H(\imag \omega; \p)}$ and system
    poles of Example~1.}%
  \label{fig:surf_synthetic}
\end{figure}%
\begin{figure}[t]
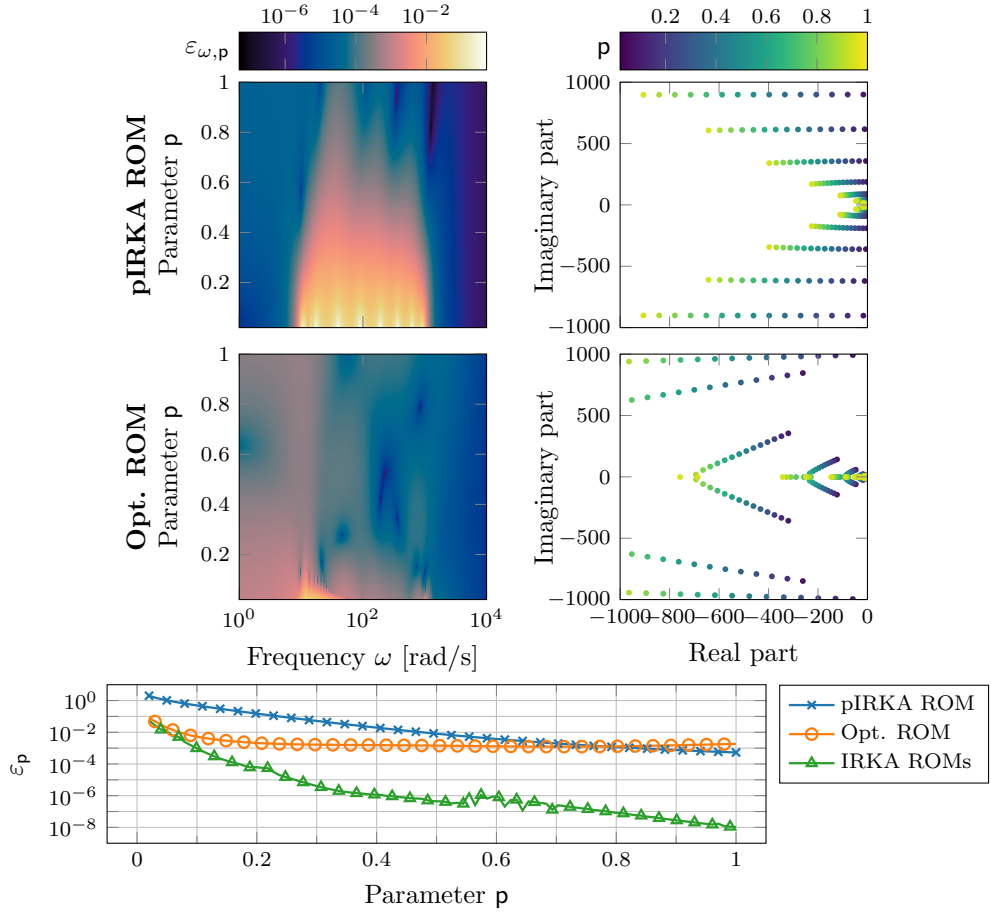

  \centering
  \surfpoleromplots{Synthetic}{4_4_16}
  \errparam{Synthetic}{4_4_16}
  \caption{Example~1 ROM performance.
  	Scaled frequency response error $\varepsilon_{\omega, \p}$ and
    poles for the pIRKA ROM (top row), the same for our optimization-derived ROM (middle row),
    and scaled $\Htwo$ error $\varepsilon_{\p}$ for the pIRKA ROM, our optimized-derived ROM,
    and various pointwise non-parametric IRKA-derived ROMs (bottom).}%
  \label{fig:surf_h2l2_synthetic}
\end{figure}%
\begin{figure}[t]
  \centering
  \surfpolefomplots{Penzl}
  \caption{Frequency response magnitude $\abs{H(\imag \omega; \p)}$ and system
    poles of Example~2.}%
  \label{fig:surf_penzl}
\end{figure}%
\begin{figure}[t]
  \centering
  \surfpoleromplots{Penzl}{3_4_12}
  \errparam{Penzl}{3_4_12}
  \caption{Example~2 ROM performance.
    See the caption of \cref{fig:surf_h2l2_synthetic} for more details.}%
  \label{fig:surf_h2l2_penzl}
\end{figure}%
\begin{figure}[t]
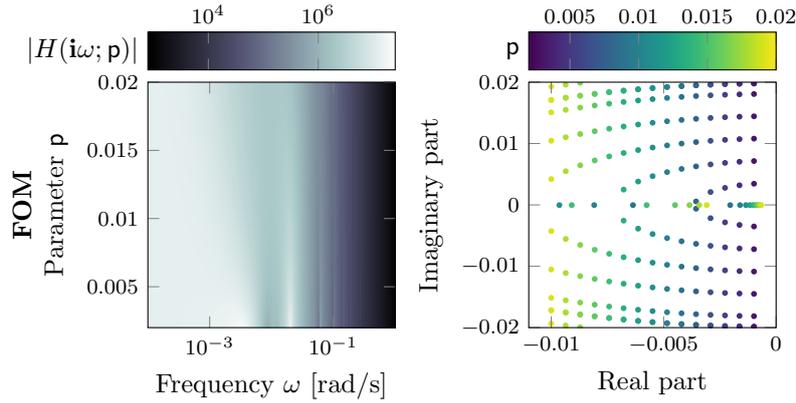

  \centering
  \surfpolefomplots{TripleChainParametricDamping}
  \caption{Frequency response magnitude $\abs{H(\imag \omega; \p)}$ and a subset of the system
    poles (ones near the origin) of Example~3.}%
  \label{fig:surf_triple_chain}
\end{figure}%
\begin{figure}[t]
  \centering
  \surfpoleromplotsextended{TripleChainParametricDamping}{TripleChainParametricDampingBCpIRKA}{TripleChainParametricDampingEABCpIRKA}{2_2_4}
  \errparamextended{TripleChainParametricDamping}{TripleChainParametricDampingBCpIRKA}{TripleChainParametricDampingEABCpIRKA}{2_2_4}
  \caption{Example~3 ROM performance.
    See the caption of \cref{fig:surf_h2l2_synthetic} for more details.}%
  \label{fig:surf_h2l2_triple_chain}
\end{figure}%
experiments were done in \matlab{} R2019b on a computer with two Intel Xeon
Silver $4110$ CPUs ($8$ cores per CPU) and $192$~GB of RAM\@.
Running times were measured using \texttt{tic} and \texttt{toc}.
We implemented our new algorithm using \granso:
GRadient-based Algorithm for Non-Smooth Optimization.\footnote{Available at
\url{http://www.timmitchell.com/software/GRANSO/}.}
We used GRANSO because
(a) it supports custom stopping criteria,
which we used to implement~\eqref{eq:our_conv_crit}, and
(b) when no constraints are given explicitly (per our implementation description
in \cref{sec:opt_based}),
GRANSO does BFGS (for constrained problems, GRANSO uses the BFGS-SQP algorithm
of~\cite{CurMO17}).
We used GRANSO v1.6.4 with its default parameters,
except that we set $\mathtt{opts.maxit} = 250$ and
$\mathtt{opts.opt\_tol} = 0$.
We set the latter parameter to zero since we only want to halt optimization of
the ROM once~\eqref{eq:our_conv_crit} is satisfied;
for this custom stopping condition,
we used $\mathtt{tol} = 10^{-5}$.
Our software,
used to compute all results reported here,
is open source.\footnote{DOI\@:
\href{https://doi.org/10.5281/zenodo.5710777}{10.5281/zenodo.5710777}}

For validating our method,
we considered one-parameter FOMs of the form
\begin{align}
  \label{eq:pLTI_A_sep}
  \begin{split}
    E \dot{x}(t; \p) & = \myparen{A_1 + \p A_2} x(t; \p) + B u(t), \\
    y(t; \p) & = C x(t; \p).
  \end{split}
\end{align}
This choice allowed us to use projection-based methods for both initialization
and as a comparison to our optimization-based approach.
We refer to this alternative as pIRKA (``piecewise IRKA''),
as it is based on the piecewise $\Htwo$-optimal interpolatory parametric
MOR method~\cite[Algorithm~5.1]{morBauBBetal11};
pIRKA consists of:
\begin{enumerate}
\item choosing $p_s$ linearly-spaced parameter values
  $\p^{(1)}, \dots, \p^{(p_s)} \in \pset$,
\item\label{item:pIRKA-IRKA} reducing local non-parametric models
  $H(\cdot; \p^{(i)})$ using IRKA to obtain local basis matrices
  $V^{(i)}, W^{(i)} \in \bbR^{n \times r_s}$ for some $r_s < n$,
\item choosing a global basis matrix $V$ as the first $r \le 2 p_s r_s$ left
  singular vectors of
  \begin{equation*}
    \begin{bmatrix}
      V^{(1)} & \cdots & V^{(p_s)} & W^{(1)} & \cdots & W^{(p_s)}
    \end{bmatrix},
  \end{equation*}
\item\label{item:pIRKA-proj} projecting full-order matrices
  \begin{align}
  \label{eq:matrices_pIRKA}
    \chE = V\tran E V, \quad
    \chA_1 = V\tran A_1 V, \quad
    \chA_2 = V\tran A_2 V, \quad
    \chB = V\tran B, \quad
    \chC = C V.
  \end{align}
\end{enumerate}
In step~\ref{item:pIRKA-IRKA},
we used \texttt{mess\_tangential\_irka} from the
Matrix Equation Sparse Solver (\mmess{}) library~\cite{SaaKB19-mmess-2.0}
with the following settings in the \texttt{opts.irka} structure:
$\mathtt{r} = r_s$,
$\mathtt{maxiter} = 100$,
$\mathtt{h2\_tol} = \mathtt{shift\_tol} = 10^{-6}$, and
$\mathtt{flipeig} = 0$.
We used a one-sided projection in step~\ref{item:pIRKA-proj}
because for our examples here, which are all uniformly strictly dissipative
(for all $\p \in \pset$, $\Ap + \Ap\tran$ and $\Ep$ are, respectively, negative
definite and positive definite),
it guarantees asymptotic stability of the ROM for all parameters.

To compare accuracies, for each ROM,
we computed the $\HoL$ error once using~\eqref{eq:HoL_norm_gramian} and
\texttt{mess\_lradi} from \mmess{}, so that the large Lyapunov
equation in~\eqref{eq:controllability_large} or~\eqref{eq:observability_large}
could be efficiently solved.
For conciseness,
we introduce:
\begin{equation*}
  \varepsilon
  = \frac{\normHoL[\big]{H - \hH}}{\normHoL*{H}}, \
  \varepsilon_{\p}
  = \frac{\normHtwo[\big]{H(\cdot; \p) - \hH(\cdot; \p)}}{\normHoL*{H}}, \
  \varepsilon_{\omega, \p}
  = \frac{\normF[\big]{H(\imag\omega; \p)
      - \hH(\imag\omega; \p)}}{\normHoL*{H}}.
\end{equation*}
In order, $\varepsilon$ is the relative $\HoL$ error,
$\varepsilon_{\p}$ is the $\Htwo$ error for a particular parameter value $\p$
relative to the $\HoL$ norm of the original system,
while finally,
$\varepsilon_{\omega, \p}$ is the transfer function value error for a particular
frequency $\omega$ and parameter value $\p$ relative to the $\HoL$ norm of the
original system.

\subsection{Example 1: A synthetic parametric model}%
\label{sec:example_synth}
Our first example is the synthetic parametric model~\cite{morwiki_synth_pmodel},
where we used $n = 1000$ for the order of the FOM and $\pset = [0.02, 1]$
as the parameter set.
From \cref{fig:surf_synthetic},
we see that the system poles move closer to the imaginary axis as $\p$
decreases.
For computing ROMs, we chose $r = 16$ (so $800$ optimization variables) and
used $p_s = 4$ and $r_s = 4$ for pIRKA\@.

The initial ROM computed by pIRKA resulted in a relative error $\varepsilon$ of
$0.3145$.
Meanwhile, optimizing the ROM using \granso{} took $250$~iterations
($17.3$~minutes), where the relative error $\varepsilon$ was
reduced to $8.395 \times 10^{-3}$.
In other words, our new approach produced a ROM whose $\HoL$ error is about $37$
times better than the one obtained by pIRKA alone.
\Cref{fig:surf_h2l2_synthetic} shows the behavior of ROMs from pIRKA and
\granso{}.
On the top left,
bright peaks can be seen for the pIRKA, i.e.,
regions where the ROM does not approximate the FOM well.
On the middle right,
the errors overall are more homogeneous and smaller.
Comparing ROM poles (top right and middle right),
we see that poles for the pIRKA ROM and FOM move horizontally in a similar
fashion with respect to $\p$,
while GRANSO found a ROM with a more complex parameter dependency of the poles.
The error improvement is also depicted in the bottom graph,
where we observe that optimization produced a ROM that is better for most
parameter values in $\pset$ and up to two orders of magnitude for some.
To get a sense for how close our ROM computed by \granso{} is to optimal
\emph{unstructured} ROMs (where $\hE, \hA, \hB, \hC$ are arbitrary matrices),
the figure also shows the results from running IRKA for individual parameter
values.
Note that this is only an indication for the lower bound,
since IRKA can not be guaranteed to find a global minimum
(the jumps in the curve confirm this).
We see that \granso{}'s ROM appears to be close to the optimal ROMs for small
parameter values, while the error flattens for larger ones.

\subsection{Example 2: A parametric version of Penzl's FOM}%
\label{sec:example_penzl}
We now consider the parameterized version of the Penzl FOM~\cite{morPen99}
used in~\cite{morIonA14}, where $n = 1006$ and $\pset = [10, 100]$.
As illustrated in \cref{fig:surf_penzl},
changing the parameter moves one of the complex conjugate pairs of poles.
Here we used $r = 12$ (so $456$ optimization variables) and
$p_s = 3$ and $r_s = 4$ for pIRKA\@.

For this FOM, pIRKA produced a ROM with a relative error $\varepsilon$ of
$2.574 \times 10^{-2}$.
In contrast,
our optimization-based algorithm reduced that error to
$6.051 \times 10^{-4}$,
i.e., about $43$ times better, which took $70$ optimization iterations
($6.8$~minutes).
From \cref{fig:surf_h2l2_penzl},
it seems that the major factor determining the quality of the ROMs was how well
the middle peak was captured.
We also see that there is more than a 1.5 orders of magnitude improvement in
\granso{}'s ROM for all parameter values.
The errors for the unstructured ROMs are better but by less than an order of magnitude.

\subsection{Example 3: A parametric damped linear vibrational mechanical system}%
\label{sec:example_triple_chain}
Finally, we consider a parametric version of the triple chain example~\cite{TruV09}
\begin{align}
  \label{eq:second_order}
  \begin{split}
    \cM \ddot{x}(t; \p) + \cDp \dot{x}(t; \p) + \cK x(t; \p)
    & = \cB u(t), \\
    y(t; \p)
    & = \cC x(t; \p),
  \end{split}
\end{align}
a second-order system with $\cM, \cDp, \cK \in \bbR^{\tilde{n} \times \tilde{n}}$,
$\cB, \cC\tran \in \bbR^{\tilde{n} \times 1}$, and
$\tilde{n} = 1501$.
The damping matrix is $\cD = \alpha \cM + \beta \cK$ with
scalars $\alpha, \beta > 0$,
so by choosing $\p = \alpha = \beta$,
we have the parameter dependency $\cDp = \p (\cM + \cK)$.
\Cref{fig:surf_triple_chain} shows the frequency response of the system and
its poles near the origin;
as expected,
the poles move closer to the imaginary axis as damping is decreased.
Using the procedure described in~\cite{PanWL12},
the second-order system~\eqref{eq:second_order} can be transformed into a
strictly dissipative first-order realization of the form~\eqref{eq:pLTI_A_sep}
with order $n = 2 \tilde{n} = 3002$ and matrices
\begin{align*}
    E
    & =
      \begin{bsmallmatrix}
        \cK & \gamma \cM \\
        \gamma \cM & \cM
      \end{bsmallmatrix},\ \
    A_1
    =
      \begin{bsmallmatrix}
        -\gamma \cK & \cK \\
        -\cK & \gamma \cM
      \end{bsmallmatrix},\ \
    A_2
    =
      \begin{bsmallmatrix}
        0 & -\gamma (\cM + \cK) \\
        0 & -(\cM + \cK)
      \end{bsmallmatrix},\ \
    B
    =
      \begin{bsmallmatrix}
        \gamma \cB \\
        \cB
      \end{bsmallmatrix},\ \
    C
    =
      \begin{bsmallmatrix}
        \cC & 0
      \end{bsmallmatrix},
\end{align*}%
where
$0
< \gamma
< \min_{\p \in \pset} \eig{\cDp, \cM + \frac{1}{4} \cDp \cK^{-1} \cDp}$.
For our experiment, we used
$\pset = [2 \times 10^{-3}, 2 \times 10^{-2}]$ and
$\gamma
= \frac{1}{2} \min_{\p \in \pset}
\eig{\cDp, \cM + \frac{1}{4} \cDp \cK^{-1} \cDp}$,
where $\gamma$ was computed using Chebfun.
Note that the transformation to a strictly dissipative form is only needed for
pIRKA to guarantee an asymptotically stable ROM\@.
The optimization procedure can work with other first-order realizations of the FOM\@.

In this third experiment, we now explore the effects of choosing ROMs
with different parameter structures, specifically
\begin{center}
  \renewcommand{\arraystretch}{1.5}
  \begin{tabular}{l|c|c|c|c}
    Variant            &  $\hEp$            &  $\hAp$             &  $\hBp$             &  $\hCp$  \\
    \hline
    ROM$_{\text{SP}}$  &  $\hE$             &  $\hA_1 + \p \hA_2$ &  $\hB$              &  $\hC$  \\
    ROM$_{\text{IO}}$  &  $\hE$             &  $\hA$              &  $\hB_1 + \p \hB_2$ &  $\hC_1 + \p \hC_2$  \\
    ROM$_{\text{All}}$ &  $\hE_1 + \p\hE_2$ &  $\hA_1 + \p \hA_2$ &  $\hB_1 + \p \hB_2$ &  $\hC_1 + \p \hC_2$
  \end{tabular}
\end{center}
where ROM$_{\text{SP}}$ denotes a structure-preserving ROM, ROM$_{\text{IO}}$
denotes that the input and output matrix-valued functions are parametric,
and ROM$_{\text{All}}$ denotes that all matrix-valued functions are parametric.
Using the matrices~\eqref{eq:matrices_pIRKA} obtained via pIRKA,
we initialized our algorithm for each variant as follows
\begin{center}
  \renewcommand{\arraystretch}{1.5}
  \begin{tabular}{l|cc|cc|cc|cc}
    Variant            &  $\hE_1$  &  $\hE_2$ &  $\hA_1$ &  $\hA_2$  &  $\hB_1$  &  $\hB_2$ &  $\hC_1$ &  $\hC_2$  \\
    \hline
    ROM$_{\text{SP}}$  &  $\chE$   & $-$ &  $\chA_1$                    & $ \chA_2$   &  $\chB$ & $-$  &  $\chC$ & $-$ \\
    ROM$_{\text{IO}}$  &  $\chE$   & $-$ &  $\chA_1 + 0.011 \cdot \chA_2$ & $-$         &  $\chB$ & $0$ &  $\chC$ & $0$ \\
    ROM$_{\text{All}}$ &  $\chE $  & $0$ &  $\chA_1$                    & $ \chA_2$   &  $\chB$ & $0$ &  $\chC$ & $0$
  \end{tabular}
\end{center}
where $-$ denotes matrices that are not applicable for
the given parametric ROM, and for ROM$_{\text{IO}}$, the value $0.011$ is chosen as
the midpoint of the parameter interval $\pset$.
We chose reduced order $r = 4$
and set $p_s = 2$ and $r_s = 2$ for pIRKA\@.

For our triple chain example, pIRKA produced a ROM, where the relative error
was $5.267 \times 10^{-2}$.
\cref{table:results} shows performance data for our three different parametric ROMs.
\begin{table}
  \renewcommand{\arraystretch}{1.2}
  \centering
  \caption{For Example~3, we show a comparison of the performance of different
    parametric ROMs (Variant) with respect to
    the number of optimization variables ($\#$ of var.),
    the number of iterations ($\#$ of iter.),
    the runtime in minutes (Time (min.)) and
    the relative error $\varepsilon$ (Rel. Err. $\varepsilon$).
  }%
  \label{table:results}
  \begin{tabular}{l|c|c|c|c}
    Variant & $\#$ of var. &$\#$ of iter. & Time (min.) & Rel.\ err.\ $\varepsilon$ \\
    \hline
    ROM$_{\text{SP}}$  & $56$ & $225$ & $57.2$  & $2.506 \times 10^{-2}$ \\
    ROM$_{\text{IO}}$  & $48$ & $135$ & $470.4$ & $3.901 \times 10^{-1}$\\
    ROM$_{\text{All}}$ & $80$ & $212$ & $730.4$ & $2.506 \times 10^{-2}$\\
  \end{tabular}
\end{table}
As can be seen, the approximation quality of ROM$_{\text{SP}}$ is $2.1$ times better
than the result obtained by pIRKA\@.
In~\cref{fig:surf_h2l2_triple_chain},
we see that the error is mostly in the lower frequencies.
Meanwhile, ROM$_{\text{IO}}$ performed worse than pIRKA, which is not surprising since this variant
cannot model changes in the poles.
Finally, for ROM$_{\text{All}}$, we see that the relative error $\varepsilon$
is the same as for ROM$_{\text{SP}}$.
This too is not surprising as ROM$_{\text{SP}}$ is very close to
the optimal unstructured models for most of the parameter values
and the additional parametrizations available in ROM$_{\text{All}}$ do not capture any structure
in the FOM that ROM$_{\text{SP}}$ does not.
Note that the runtimes for the ROM$_{\text{IO}}$ and ROM$_{\text{All}}$ variants are both higher than
the runtime for the ROM$_{\text{SP}}$ variant;
this is because the \texttt{integral} function in \matlab{} required significantly more quadrature points for these variants
than it did for ROM$_{\text{SP}}$.
\afterpage{\FloatBarrier}



\section{Concluding remarks}\label{sec:conclusion}
In our new MOR method using gradients of the $\HoL$ error,
solving the numerous sparse-dense Sylvester equations
is the vast majority of the overall cost.
As such, faster methods for parametric matrix equations could make our method even more efficient,
and so, leveraging reduced basis approaches or tensor techniques,
e.g.,~\cite{morSonS17,KrePT14}, are promising research directions.
Moreover, it could be interesting to investigate the numerical
stability issues in parametric TSIA or alternative ROM representations that
might eliminate the need for inverses.



\end{document}